\let\pa\partial
\let\na\nabla
\let\eps\varepsilon
\newcommand{\N}{{\mathbb N}}
\newcommand{\R}{{\mathbb R}}
\newcommand{\diver}{\operatorname{div}}
\newcommand{\T}{{\mathcal T}}
\newcommand{\E}{{\mathcal E}}
\newcommand{\m}{\operatorname{m}}
\newcommand{\Eint}{{\mathcal E}_{{\rm int},K}}
\newcommand{\dist}{{\operatorname{d}}}
\newcolumntype{C}[1]{>{\centering\arraybackslash }b{#1}}
\newtheorem{theorem}{Theorem}
\newtheorem{lemma}[theorem]{Lemma}
\newtheorem{proposition}[theorem]{Proposition}
\newtheorem{remark}[theorem]{Remark}
\newtheorem{corollary}[theorem]{Corollary}
\begin{document}

\title[Finite-volume scheme for the SKT system]{A convergent structure-preserving
finite-volume scheme for the Shigesada-Kawasaki-Teramoto population system}

\author[A. J\"ungel]{Ansgar J\"ungel}
\address{Institute for Analysis and Scientific Computing, Vienna University of
	Technology, Wiedner Hauptstra\ss e 8--10, 1040 Wien, Austria}
\email{juengel@tuwien.ac.at}

\author[A. Zurek]{Antoine Zurek}
\address{Institute for Analysis and Scientific Computing, Vienna University of
	Technology, Wiedner Hauptstra\ss e 8--10, 1040 Wien, Austria}
\email{antoine.zurek@tuwien.ac.at}

\date{\today}

\thanks{The authors have been partially supported by the Austrian Science Fund (FWF), 
grants P30000, P33010, F65, and W1245.}

\begin{abstract}
An implicit Euler finite-volume scheme for an $n$-species population 
cross-diffusion system of
Shigesada--Kawasaki--Teramoto-type in a bounded domain with no-flux boundary
conditions is proposed and analyzed. The scheme
preserves the formal gradient-flow or entropy structure and preserves the
nonnegativity of the population densities. The key idea is to consider a suitable 
mean of the mobilities in such a way that a discrete chain rule is fulfilled and 
a discrete analog of the entropy inequality holds. 
The existence of finite-volume solutions, the convergence of the scheme,
and the large-time asymptotics to the constant steady state are proven.
Furthermore, numerical experiments in one and two space dimensiona
for two and three species are presented.
The results are valid for a more general class of cross-diffusion systems
satisfying some structural conditions.
\end{abstract}

\keywords{Cross-diffusion system, population dynamics, finite-volume method,
discrete entropy dissipation, convergence of the scheme, large-time asymptotics.}

\subjclass[2000]{65M08, 65M12, 35K51, 35Q92, 92D25.}

\maketitle


\section{Introduction}

The population model of Shigesada, Kawasaki, and Teramoto (SKT) 
describes the segregation of two competing species \cite{SKT79}. 
It consists of quasilinear
parabolic equations for the population densities with a generally nonsymmetric 
and not positive semidefinite diffusion matrix. To overcome the lack of positive 
definiteness,
it was suggested in \cite{ChJu04,GGJ03} to use so-called entropy variables
that yield a transformed diffusion system with a positive semidefinite diffusion matrix.
In particular, the SKT cross-diffusion system of \cite{SKT79}
has a formal gradient-flow or entropy structure. 
This approach can be generalized to an arbitrary number of species \cite{CDJ18}.
It is important to design a general easy-to-implement numerical scheme that preserves 
this structure and that can be proven to be convergent.
Previous works like \cite{ABR11,BaBl04,SCW19} propose numerical approximations
that satisfy some of these properties but not all of them. In this paper,
we suggest a finite-volume scheme for $n$-species SKT-type population systems,
preserving the entropy structure and the nonnegativity of densities and conserving
the mass (in the absence of source terms). In fact, our results are even valid for
a more general class of cross-diffusion systems satisfying some structural
conditions.

More precisely, we consider the cross-diffusion system
\begin{equation}\label{1.eq}
  \pa_t u_i - \diver\bigg(\sum_{j=1}^n A_{ij}(u)\na u_j\bigg) = f_i(u)
  \quad\mbox{in }\Omega,\ t>0,\ i=1,\ldots,n,
\end{equation}
where $\Omega\subset\R^d$ ($d\geq 1$) is a bounded domain, $u=(u_1,\ldots,u_n)$ 
is the vector of population densities, with the diffusion coefficients
\begin{equation}\label{1.A}
  A_{ij}(u) = \delta_{ij}\bigg(a_{i0} + \sum_{k=1}^n a_{ik}u_k\bigg)
	+ a_{ij}u_i, \quad i,j=1,\ldots,n,
\end{equation}
and the Lotka--Volterra source terms,
\begin{equation}\label{1.f}
  f_i(u) = u_i\bigg(b_{i0} - \sum_{j=1}^n b_{ij}u_j\bigg), \quad i=1,\ldots,n,
\end{equation}
where we assume that $a_{ii}>0$, $b_{ii}>0$ for $i=1,\ldots,n$ and
$a_{ij}\ge 0$ and $b_{ij}\ge 0$ for $i\neq j$.
We prescribe no-flux boundary and initial conditions:
\begin{equation}\label{1.bic}
  \sum_{j=1}^n A_{ij}(u)\na u_j\cdot\nu = 0\quad\mbox{on }\pa\Omega,\ t>0, \quad
	u_i(0)=u_i^0\quad\mbox{in }\Omega, \quad i=1,\ldots,n,
\end{equation}
where $\nu$ denotes the exterior unit normal vector to $\pa\Omega$.
When $n=2$, we recover the SKT system of \cite{SKT79} without environmental
potentials. Our analysis also works when we include the corresponding drift terms 
(see Section \ref{ssec.niche}).

Let $h:[0,\infty)^n\to [0,\infty)$ be a convex function and set 
$H[u]=\int_\Omega h(u)dx$.
The entropy inequality is derived, for suitable source terms, by choosing $h'(u)$ 
formally as a test function in the weak formulation of \eqref{1.eq}, leading to
\begin{equation}\label{1.ei}
  \frac{dH}{dt} 
	+ \int_\Omega \na u:h''(u)A(u)\na u dx \le C(T), \quad 0<t<T,
\end{equation}
where $h''(u)$ is the Hessian of $h$, ``:'' is the Frobenius matrix product,
and $C(T)>0$ comes from the source terms.
We call $H$ an entropy and $h$ an entropy density if $h''(u)A(u)$ is 
positive (semi-) definite. This typically provides gradient estimates and moreover,
if $C(T)=0$, then $H$ is a Lyapunov functional along the solutions to \eqref{1.eq}.

In the case of the $n$-species SKT model, the entropy density is given by
\begin{equation}\label{1.h}
  h(u) = \sum_{i=1}^n\pi_i \big(u_i(\log u_i-1)+1\big), \quad u\in[0,\infty)^n,
\end{equation}
where the numbers $\pi_i>0$ are assumed to satisfy $\pi_i a_{ij}=\pi_j a_{ji}$ 
for $i\neq j$.
This can be recognized as the detailed-balance condition for the time-continuous
Markov chain associated to $(a_{ij})$, and the vector $(\pi_i)$ is the
corresponding invariant measure \cite{CDJ18}. 
It turns out that $\na u:h''(u)A(u)\na u$ is bounded from below by 
$\sum_{i=1}^n a_{ii}|\na u_i|^2$, which yields $H^1(\Omega)$ estimates.
Moreover, it can be shown that the solutions $u_i$ are nonnegative and
the mass $\int_\Omega u_i(t)dx$ is constant in time if $f_i=0$.
Our aim is to preserve this structure on the discrete level.

In the literature, there are already various numerical schemes for the
SKT model. Up to our knowledge, the first numerical simulations,
based on a finite-difference scheme in one space dimension, were
performed in \cite{GGJ01}. A convergence result for an implicit Euler approximation,
which preserves the nonnegativity of the densities, was proved in \cite{GGJ03},
but the space variable was not discretized. Based on the entropy structure found
in \cite{ChJu04,GGJ03}, a convergent entropy-dissipative finite-element approximation
was proposed in \cite{BaBl04}. The entropy structure is preserved by defining
an approximation of a certain mean function.
For this, the authors of \cite{BaBl04} need an approximated entropy and an
approximated diffusion matrix, which complicates the numerical scheme. Moreover,
their scheme does not preserve the nonnegativity of the densities.
A convergent finite-volume scheme that preserves the nonnegativity was suggested in
\cite{ABR11}, but the analysis is valid only for positive definite diffusion
matrices $(A_{ij}(u))$, which requires strong conditions on $a_{ij}$.
Another idea was developed in \cite{Mur17}, by considering a linear finite-volume
scheme and proving unconditional stability and convergence, but without 
structure-preserving properties. 
A discontinuous Galerkin scheme was used in \cite{SCW19}, which
preserves the formal gradient-flow structure and nonnegativity of the densities,
but no convergence analysis was performed. Finally, operator-splitting 
techniques were also applied to the SKT model \cite{BePa19,GLS09}.

Compared to the literature, our finite-volume scheme 
(i) preserves the entropy structure of the $n$-species model under the
detailed-balance condition, (ii) preserves the nonnegativity of $u_i\ge 0$,
and (iii) conserves the mass when the source terms vanish. 
We design and analyze in fact a finite-volume scheme for a general
cross-diffusion model of the form \eqref{1.eq} and \eqref{1.bic}, satisfying
some structural conditions specified in Section \ref{sec.main}. 
For this scheme, we prove the
existence of discrete finite-volume solutions and show that a subsequence
converges to the solutions to \eqref{1.eq} and \eqref{1.bic}. 
In Section \ref{sec.exam}, we apply the results obtained in the 
general framework to the SKT model \eqref{1.eq}-\eqref{1.bic}.

The derivation of the entropy inequality \eqref{1.ei} is based on the
chain rule $h''(u)\na u=\na h'(u)$. The difficulty is to formulate this
identity on the discrete level. Let $\Omega$ be the union of
cells $K$ and let $\sigma=K|L$ be the edge between two neighboring
cells $K$ and $L$. The finite-volume density $u_i$ is constant on
each cell, and we write $u_{i,K}$ for its value and set $u_K=(u_{1,K},\ldots,u_{n,K})$.
A discrete analog of the chain rule is the vector-valued identity
$$
  h''(\widetilde u_\sigma)(u_L-u_K) = h'(u_L)-h'(u_K),
$$
where $\widetilde u_\sigma$ is a mean vector. 
This approach resembles the discrete-gradient method \cite[Section V.5]{HLW06}.
However, the mean-value theorem
for vector-valued functions can be formulated only as
$$
  \bigg(\int_0^1 h''(su_L + (1-s)u_K)ds\bigg)(u_L-u_K) = h'(u_L)-h'(u_K),
$$
and in general, a mean vector $\widetilde u_\sigma$ cannot be found.
Therefore, we assume that the entropy density is the sum of entropy densities
for each species, $h(u)=\sum_{i=1}^n h_i(u_i)$. Then the Hessian of $h$ is diagonal,
and the standard mean-value theorem can be applied componentnwise. 
Fortunately, the entropy
\eqref{1.h} of the SKT model satisfies this condition. In this case, the mean
vector is computed by
$$
  \widetilde u_{i,\sigma} = \frac{u_{i,L}-u_{i,K}}{\log u_{i,L}-\log u_{i,K}}\quad
	\mbox{for }\sigma=K|L\mbox{ if }u_{i,K}\neq u_{i,L}.
$$
This corresponds to the logarithmic mean, used in, e.g., \cite{Mie13}. 
General mean functions are defined in, e.g., \cite{CCHK20,FiHe17,GrRu00}.
In order to achieve a discrete analog of the entropy inequality \eqref{1.ei}, 
the diffusion matrix has to be evaluated
at the mean vector $\widetilde u_\sigma$, 
i.e., the fluxes of the finite-volume 
scheme along the edge $\sigma=K|L$ have to be discretized according to 
$$
  \mathcal{F}_{i,K,\sigma} = -\sum_{j=1}^n\tau_\sigma A_{ij}(\widetilde u_\sigma)
	(u_{j,L}-u_{j,K}), \quad i=1,\ldots,n,
$$
where $\tau_\sigma$ is the transmissibility constant defined in 
\eqref{2.trans} below.

The paper is organized as follows. The numerical scheme and our main results
(existence of discrete solutions, convergence of the scheme, and 
large-time behavior) are introduced
in Section \ref{sec.main}. Examples that satisfy our general assumptions,
including the SKT model, are presented in Section \ref{sec.exam}.
In Section \ref{sec.ex}, we prove the existence of discrete solutions.
Uniform estimate are derived in Section \ref{sec.est}, and Section \ref{sec.conv}
is devoted to the proof of the convergence of the scheme. 
The large-time asymptotics is shown in Section \ref{sec.time}.
Finally, we present in Section 
\ref{sec.numer} some numerical examples for the two- and three-species SKT system.


\section{Numerical scheme and main results}\label{sec.main}

\subsection{Notation and definitions}

We present the discretization of the domain $\Omega_T=\Omega\times(0,T)$.
We consider only two-dimensional domains $\Omega$, but the generalization
to higher space dimensions is straightforward. Let $\Omega\subset\R^2$ be a bounded, 
polygonal domain. An admissible mesh of $\Omega$ is given by (i) a family $\T$
of open polygonal control volumes (or cells), (ii) a family $\E$ of edges, and
(iii) a family ${\mathcal P}$ of points $(x_K)_{K\in\T}$ associated to the
control volumes and satisfying Definition 9.1 in \cite{EGH00}. This definition
implies that the straight line $\overline{x_Kx_L}$ between two centers of
neighboring cells is orthogonal to the edge $\sigma=K|L$ between two cells.
For instance, Vorono\"{\i} meshes satisfy this condition \cite[Example 9.2]{EGH00}.
The size of the mesh is denoted by $\Delta x = \max_{K\in\T}\operatorname{diam}(K)$.
The family of edges $\E$ is assumed to consist of interior edges $\E_{\rm int}$
satisfying $\sigma\in\Omega$ and boundary edges $\sigma\in\E_{\rm ext}$ satisfying
$\sigma\subset\pa\Omega$. For given $K\in\T$, $\E_K$ is the set of edges of $K$,
and it splits into $\E_K=\Eint\cup\E_{{\rm ext},K}$. 
For any $\sigma\in\E$, there exists at least one cell $K\in\T$ such that 
$\sigma\in\E_K$.

We need the following definitions. For $\sigma\in\E$, we introduce the distance
$$
  \dist_\sigma = \begin{cases}
	\dist(x_K,x_L) &\quad\mbox{if }\sigma=K|L\in\E_{{\rm int},K}, \\
	\dist(x_K,\sigma) &\quad\mbox{if }\sigma\in\E_{{\rm ext},K},
	\end{cases}
$$
where d is the Euclidean distance in $\R^2$, and the transmissibility coefficient
\begin{equation}\label{2.trans}
  \tau_\sigma = \frac{\m(\sigma)}{\dist_\sigma},
\end{equation}
where $\m(\sigma)$ denotes the Lebesgue measure of $\sigma$.
The mesh is assumed to satisfy the following regularity assumption: There exists
$\zeta>0$ such that for all $K\in\T$ and $\sigma\in\E_K$,
\begin{equation}\label{2.dd}
  \dist(x_K,\sigma)\ge \zeta \dist_\sigma.
\end{equation}

Let $T>0$, let $N_T\in\N$ be the number of time steps, and introduce the
step size $\Delta t=T/N_T$ as well as the time steps $t_k=k\Delta t$ 
for $k=0,\ldots,N_T$. We denote by ${\mathcal D}$
the admissible space-time discretization of $\Omega_T$ composed of an admissible
mesh ${\mathcal T}$ and the values $(\Delta t,N_T)$. 

We also introduce suitable function spaces for the numerical scheme. The space of
piecewise constant functions is defined by 
$$
  \mathcal{H}_\T = \bigg\{v: \Omega\to\R:\exists (v_K)_{K\in\T}\subset\R,\
	v(x)=\sum_{K\in\T}v_K\mathbf{1}_K(x)\bigg\},
$$
where $\mathbf{1}_K$ is the characteristic function on $K$.
In order to define a norm on this space, we first introduce the notation
$$
  v_{K,\sigma} = \begin{cases}
	v_L &\quad\mbox{if }\sigma=K|L\in\Eint, \\
	v_K &\quad\mbox{if }\sigma\in\E_{{\rm ext},K},
	\end{cases} 
$$
for $K\in\T$, $\sigma\in\E_K$ and the discrete operators
$$
  \textrm{D}_{K,\sigma} v := v_{K,\sigma}-v_K, \quad 
	\textrm{D}_\sigma v := |\mathrm{D}_{K,\sigma} v|.
$$

Let $q\in[1,\infty)$ and $v\in\mathcal{H}_\T$. The discrete $W^{1,q}$ seminorm 
and discrete $W^{1,q}$ norm on $\mathcal{H}_\T$ are given by 
$$
  |v|_{1,q,\T}^q = \sum_{\sigma\in\E}\m(\sigma)\,\dist_{\sigma} 
	\bigg|\frac{\textrm{D}_{\sigma} v}{\dist_{\sigma}}\bigg|^q, \quad 
	\|v\|_{1,q,\T}^q = |v|^q_{1,q,\T} + \|v\|^q_{0,q,\T},
$$
respectively, and $\|v\|_{0,q,\T}$ denotes the $L^q$ norm 
i.e.\ $\|v\|_{0,q,\T} = (\sum_{K\in\T}\m(K)|v_K|^q)^{1/q}$. 
For given $q>1$, we associate to these norms a dual norm with respect to the
$L^2$ inner product,
$$
  \|v\|_{-1,q',\T} = \sup\bigg\{\int_\Omega vw dx: w\in\mathcal{H}_\T,\
	\|w\|_{1,q,\T}=1\bigg\},
$$
where $1/q+1/q'=1$. Then
$$
  \bigg|\int_\Omega vw dx\bigg| \le \|v\|_{-1,q',\T}\|w\|_{1,q,\T}
	\quad\mbox{for }v,w\in\mathcal{H}_\T.
$$
Finally, we introduce the space $\mathcal{H}_{\T,\Delta t}$ of piecewise constant
in time functions with values in $\mathcal{H}_\T$,
$$
  \mathcal{H}_{\T,\Delta t} = \bigg\{ v:\overline\Omega\times[0,T]\to\R:
	\exists(v^k)_{k=1,\ldots,N_T}\subset\mathcal{H}_\T,\
	v(x,t) = \sum_{k=1}^{N_T} v^k(x)\mathbf{1}_{(t_{k-1},t_k]}(t)\bigg\},
$$
equipped, for $1 \leq p,q < \infty$, with the discrete $L^p(0,T;W^{1,q}(\Omega))$ norm
$$
  \bigg(\sum_{k=1}^{N_T}\Delta t \|v^k\|_{1,q,\T}^{p}\bigg)^{1/p}.
$$


\subsection{Numerical scheme}

We define now the finite-volume scheme for the cross-diffusion model \eqref{1.eq} 
and \eqref{1.bic}, where we consider a general diffusion matrix $A(u)$ and an 
entropy density $h$ given by $h(u)=\sum_{i=1}^n h_i(u_i)$. 
We first approximate the initial functions by
\begin{equation}\label{2.init}
  u_{i,K}^0 = \frac{1}{\m(K)}\int_K u_i^0(x)dx \quad\mbox{for }K\in\T,\ 
	i=1,\ldots,n.
\end{equation}
Let $u^{k-1}=(u_1^{k-1},\ldots,u_n^{k-1})$ be given. Then the values $u_{i,K}^k$
are determined by the implicit Euler finite-volume scheme
\begin{equation}\label{2.fvm}
  \m(K)\frac{u_{i,K}^k-u_{i,K}^{k-1}}{\Delta t} 
	+ \sum_{\sigma\in\E_K}\mathcal{F}_{i,K,\sigma}^k = \m(K) f_i(u^k_K),
\end{equation}
where the fluxes $\mathcal{F}_{i,K,\sigma}^k$ are given by
\begin{equation}\label{2.flux}
  \mathcal{F}_{i,K,\sigma}^k = - \sum_{j=1}^n \tau_\sigma A_{ij}(u^k_\sigma) 
	\textrm{D}_{K,\sigma} u^k_j \quad\mbox{for }K\in \T,\ \sigma \in \E_K,
\end{equation}
and $\tau_\sigma$ is defined by \eqref{2.trans}. By the definition of the
discrete gradient $\textrm{D}_{K,\sigma}$, the discrete fluxes vanish on
the boundary edges, guaranteeing the no-flux boundary conditions. In \eqref{2.flux},
we have introduced the mean value
\begin{equation}\label{2.usigma}
  u^k_{i,\sigma} = \begin{cases}
  \widetilde u_{i,\sigma}^k
	\quad & \mbox{if } u^k_{i,K} > 0, \ u^k_{i,K,\sigma} > 0, \mbox{ and } 
	u^k_{i,K} \neq u^k_{i,K,\sigma}, \\
  u^k_{i,K} & \mbox{if } u^k_{i,K} = u^k_{i,K,\sigma} > 0, \\
  0 & \mbox{else},
  \end{cases}   
\end{equation}
where $\widetilde u_{i,K}^k\in(0,\infty)$ is the unique solution to
\begin{equation}\label{2.chain}
  h_i''(\widetilde u_{i,\sigma}^k)\textrm{D}_{K,\sigma}u_i^k
	= \textrm{D}_{K,\sigma} h_i'(u_i^k)\quad\mbox{for }K\in\T,\ \sigma\in\E_K.
\end{equation}
Since $h'_i$ is assumed to be strictly concave 
(see Hypothesis (H4) below), the definition $u_{i,\sigma}^k=0$ if $u_{i,K}^k=0$
or $u_{i,K,\sigma}^k=0$ is consistent with \eqref{2.chain}, and
the existence of a unique value $\widetilde u_{i,\sigma}^k$ follows 
from the mean-value theorem. The strict concavity of $h_i'$ (which implies
that $h_i''$ is strixtly decreasing) and
$$
  h_i''(\min\{u_{i,K}^k,u_{i,K,\sigma}^k\})
  \le h_i''(u_{i,\sigma}^k) \le  h_i''(\max\{u_{i,K}^k,u_{i,K,\sigma}^k\})
$$
lead to the bounds
\begin{equation}\label{2.est.usigma}
  \min\{u_{i,K}^k,u_{i,K,\sigma}^k\}\le
	\widetilde u_{i,\sigma}^k\le\max\{u_{i,K}^k,u_{i,K,\sigma}^k\}.
\end{equation}


\subsection{Main results}\label{ssec.main}

Our hypotheses are as follows.

\begin{labeling}{(A44)}
\item[\textbf{(H1)}] Domain: $\Omega\subset\R^2$ is a bounded polygonal domain.

\item[\textbf{(H2)}] Discretization: $\mathcal{D}$ is an admissible discretization of 
$\Omega_T$ satisfying \eqref{2.dd}.

\item[\textbf{(H3)}] Initial data: $u^0=(u^0_1,\ldots,u_n^0) \in 
L^1(\Omega;[0,\infty)^n)$ with $\int_\Omega h(u^0) dx < \infty$.

\item[\textbf{(H4)}] Entropy density: $h(u)=\sum_{i=1}^n h_i(u_i)$, where
$h_i\in C^2((0,\infty);(0,\infty))\cap C^0([0,\infty);$ $[0;\infty))$ is 
convex, $h_i':(0,\infty)\to\R$ is invertible and strictly concave,
and there exists $c_h>0$ such that $h_i(s)\ge c_h (s-1)$ for all $s\ge 0$,
$i=1,\ldots,n$.

\item[\textbf{(H5)}] Diffusion matrix: $A\in C^{0,1}([0,\infty)^n;\R^{n\times n})$ 
and there exists $c_A>0$ such that for all $z\in\R^n$ and $u\in(0,\infty)^n$,
$$
  z^\top h''(u)A(u)z \ge c_A|z|^2.
$$

\item[\textbf{(H6)}] Source terms: $f_i\in C^0([0,\infty))$, and there exist 
two constants $C_f>0$ and $C'_f>0$ such that for all $u\in[0,\infty)^n$,
$$
  \sum_{i=1}^n f_i(u)h'_i(u_i)\le C_f(1+h(u)), \quad
	\sum_{i=1}^n|f_i(u)|\le C'_f\bigg(1+\sum_{i=1}^n|u_i|^2\bigg).
$$
\end{labeling}

Let us discuss these hypotheses. The convexity of $h_i$ and the invertibility
of $h_i'$ in Hypothesis (H4) are natural conditions for the entropy method,
see \cite{Jue15,Jue16}. The strict convexity or concavity of $h'_i$ is required 
to define properly the mean value $\widetilde u^k_{i,\sigma}$ in \eqref{2.usigma}.
The lower bound for $h_i$ allows us to conclude $L^1(\Omega)$ estimates.
We assume in Hypothesis (H5) that the matrix $h''(u)A(u)$ is positive definite.
This condition can be relaxed, at least for the existence proof, to 
the ``degenerate'' positive definiteness assumption
$z^\top h''(u)A(u)z\ge c_A\sum_{i=1}^n u_i^{2m-2}z_i^2$ for $m\ge 1/2$, but
this requires certain growth conditions on the nonlinearities, which we wish
to avoid to simplify the presentation. The Lipschitz continuity of $A_{ij}$
is needed to estimate the difference $|A_{ij}(u_\sigma^k)-A_{ij}(u_K^k)|$
in the convergence proof. It is not needed to show the existence
of discrete solutions. The first bound in Hypothesis (H6) is a natural
growth condition needed in the entropy method, while the second bound
is used to estimate the discrete time derivative; 
see the proof of Lemma \ref{lem.est2}.

We introduce the discrete entropy
\begin{align}\label{2.def.entro}
  H[u^k] = \sum_{K\in\T}\m(K)h(u_K^k)\quad\mbox{for } k\ge 0.
\end{align}

\begin{theorem}[Existence of discrete solutions]\label{thm.ex}
Let Hypotheses (H1)--(H6) hold and let $\Delta t$ $<1/C_f$. Then there exists
a solution $u^k=(u_1^k,\ldots,u_n^k)\in\mathcal{H}_\T^n$ to scheme
\eqref{2.init}--\eqref{2.usigma} satisfying $u_{i,K}^k\ge 0$ for all $K\in\T$, 
$k \geq 1$, and $i=1,\ldots,n$, and it holds that
\begin{equation}\label{2.ei}
  (1-C_f\Delta t)H[u^k] + c_A\Delta t\sum_{i=1}^n\sum_{\sigma\in\E}\tau_\sigma
	(\mathrm{D}_\sigma u_i^k)^2 \le H[u^{k-1}] + C_f\Delta t\m(\Omega), \quad k\ge 1.
\end{equation}
\end{theorem}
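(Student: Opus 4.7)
The plan is to prove existence via a topological degree argument formulated in entropy variables, and then to derive \eqref{2.ei} by testing the scheme against $h_i'(u_i^k)$. Fix $k \ge 1$ with $u^{k-1} \in \mathcal{H}_\T^n$ nonnegative, and introduce the entropy variables $w_{i,K} = h_i'(u_{i,K}^k)$. Since $h_i':(0,\infty)\to\R$ is invertible by (H4), one recovers $u_{i,K}^k = (h_i')^{-1}(w_{i,K})>0$ directly, which automatically encodes the strict positivity of the discrete densities. The scheme \eqref{2.fvm}--\eqref{2.flux} together with the mean defined by \eqref{2.usigma}--\eqref{2.chain} then reads as a continuous nonlinear map $\mathcal{G}:(\mathcal{H}_\T)^n \to (\mathcal{H}_\T)^n$ in $w$, where $u_{i,\sigma}^k$ depends continuously on $w$ inside any bounded region (in which $u$ stays away from~$0$ and from~$+\infty$).

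For existence, I would truncate to a ball $B_R = \{w \in (\mathcal{H}_\T)^n : \max_{i,K}|w_{i,K}| \le R\}$, introduce a linear homotopy $\mathcal{G}_\lambda$, $\lambda\in[0,1]$, linking $\mathcal{G}_1 = \mathcal{G}$ to an invertible linear map $\mathcal{G}_0$ (for instance the scheme with $A$ replaced by the identity and $f=0$), and invoke the invariance of Brouwer's topological degree. The required a priori estimate on all possible roots of $\mathcal{G}_\lambda$ is supplied by the very entropy inequality we wish to derive: the lower bound $h_i(s)\ge c_h(s-1)$ in (H4) gives $L^1$ control and (H5) gives discrete $H^1$ control, which on the finite-dimensional space $\mathcal{H}_\T$ translates to a bound on $w$ independent of $\lambda$. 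For $R$ large enough, no root of $\mathcal{G}_\lambda$ touches $\partial B_R$, so $\deg(\mathcal{G},B_R,0) = \deg(\mathcal{G}_0,B_R,0) \neq 0$ and a solution $w^k$, hence $u^k$, exists.

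To prove \eqref{2.ei}, multiply \eqref{2.fvm} by $\Delta t\, h_i'(u_{i,K}^k)$ and sum over $K \in \T$ and $i=1,\ldots,n$. Convexity of $h_i$ yields $h_i'(u_{i,K}^k)(u_{i,K}^k-u_{i,K}^{k-1}) \ge h_i(u_{i,K}^k)-h_i(u_{i,K}^{k-1})$, so the time-difference sum is bounded below by $H[u^k] - H[u^{k-1}]$. Discrete summation by parts transforms the flux term, with zero contribution from the boundary edges due to the no-flux convention $\mathrm{D}_{K,\sigma} u_j^k = 0$ on $\E_{{\rm ext},K}$, into
\[
\Delta t \sum_{i=1}^n \sum_{\sigma\in\E_{\rm int}} \tau_\sigma \sum_{j=1}^n A_{ij}(u_\sigma^k)\, \mathrm{D}_{K,\sigma} u_j^k \cdot \mathrm{D}_{K,\sigma} h_i'(u_i^k).
\]
The discrete chain rule \eqref{2.chain} replaces $\mathrm{D}_{K,\sigma} h_i'(u_i^k)$ by $h_i''(u_{i,\sigma}^k)\mathrm{D}_{K,\sigma} u_i^k$, so this contribution equals $\Delta t \sum_\sigma \tau_\sigma (\mathrm{D}_{K,\sigma} u^k)^\top h''(u_\sigma^k) A(u_\sigma^k) (\mathrm{D}_{K,\sigma} u^k)$, which (H5) bounds below by $c_A \Delta t \sum_{i,\sigma} \tau_\sigma (\mathrm{D}_\sigma u_i^k)^2$. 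On edges where some $u_{i,\sigma}^k=0$, the bound \eqref{2.est.usigma} forces $\mathrm{D}_{K,\sigma} u_i^k = 0$, so those terms drop harmlessly. Finally, the source estimate in (H6) produces $C_f \Delta t(\m(\Omega) + H[u^k])$, and moving $C_f\Delta t\, H[u^k]$ to the left yields the factor $1-C_f\Delta t$, positive by the assumption $\Delta t<1/C_f$.

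The main obstacle I anticipate is the coupling between the fixed-point argument and the degenerate configurations where some components of $u^k$ approach zero: one must select the homotopy so that $\mathcal{G}_\lambda$ remains continuous at such boundary points of $[0,\infty)^n$ (handled by the built-in continuity of the logarithmic-type mean together with \eqref{2.est.usigma}), and so that the entropy a priori estimate persists uniformly along it. The algebraic core of the entropy computation is clean once \eqref{2.chain} is in hand; the delicate part is the degree-theoretic book-keeping ensuring that discrete $H^1$ control truly yields the needed $L^\infty$ bound on $w$ for any fixed mesh, and that this bound is independent of both $\lambda$ and the truncation radius~$R$.
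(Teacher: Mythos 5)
The entropy-inequality half of your argument is essentially the paper's computation (convexity for the time term, the discrete chain rule \eqref{2.chain} plus (H5) for the flux term, (H6) for the sources), and that part is fine. The genuine gap is in the degree argument: you claim that the a priori estimate on roots of $\mathcal{G}_\lambda$ "is supplied by the very entropy inequality we wish to derive," with (H4) giving $L^1$ control and (H5) discrete $H^1$ control, "which on the finite-dimensional space $\mathcal{H}_\T$ translates to a bound on $w$." It does not. The entropy inequality bounds $H[u]$ and $\sum_\sigma\tau_\sigma(\mathrm{D}_\sigma u_i)^2$, i.e.\ it controls $u$ from above, but it gives no lower bound on $u$ away from zero, and hence no bound whatsoever on the entropy variables $w_i=h_i'(u_i)$ (for the SKT entropy $w_i=\log u_i\to-\infty$ as $u_i\to 0^+$). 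Finite dimensionality gives equivalence of norms on $\mathcal{H}_\T$, but a bound on $u$ is simply not a bound on $w$, because $(h_i')^{-1}$ compresses the whole ray $w_i\to-\infty$ into the bounded set $u_i\in(0,1]$. So nothing prevents roots of your homotopy from escaping through $\partial B_R$ in the $w$-variables, and the degree argument does not close. Relatedly, your remark that the formulation in $w$ "automatically encodes strict positivity" points at the same problem from the other side: the theorem only asserts $u_{i,K}^k\ge 0$, and a solution of the scheme with some $u_{i,K}^k=0$ has no representative in $w$-space at all, so a fixed point in $w$ may genuinely fail to exist for the unregularized map.

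This is precisely why the paper perturbs the scheme: it adds the term $\eps\sum_{\sigma\in\E_K}\tau_\sigma \mathrm{D}_{K,\sigma}w_i^\eps-\eps\m(K)w_{i,K}^\eps$ to the equation (a discrete elliptic operator acting on $w$ itself), so that testing with $w^\eps$ produces the extra coercive term $\eps\Delta t\sum_i\|w_i^\eps\|_{1,2,\T}^2$ in the entropy inequality of Lemma \ref{lem.ei}. That term is what yields the $\eps$-dependent a priori bound $\|w^\eps\|_{1,2,\T}\le R-1$ keeping all roots of the homotopy $w^\eps=\rho F_\eps(w^\eps)$ away from $\partial Z_R$, so the degree is well defined and equals one. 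Existence of a solution to the original scheme, with possibly vanishing components and with \eqref{2.ei}, is then obtained in the limit $\eps\to 0$, using the entropy bound (via $h_i(s)\ge c_h(s-1)$) to extract a convergent subsequence of $u^\eps$ and the same bound to show $\eps w^\eps\to 0$. To repair your proof you need this regularization (or some substitute device, e.g.\ truncating $h_i'$ or working with strictly positive perturbations of the data) that supplies a genuine bound on $w$ along the homotopy; without it the key estimate you invoke is not available.
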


The proof of Theorem \ref{thm.ex} is based on a topological degree argument. 
For this, we linearize and ``regularize'' scheme \eqref{2.init}--\eqref{2.usigma}.
The regularization is needed since we are working in the entropy variables
$w_i=h'_i(u_i)$ and the diffusion operator in these variables is only
positive semidefinite. 
Then we establish an entropy inequality associated to the approximate scheme 
and perform the limit when the regularization parameter vanishes.

For the convergence result, we need some notation. For $K\in\T$ and $\sigma\in\E_K$,
we define the cell $T_{K,\sigma}$ of the dual mesh:
\begin{itemize}
\item If $\sigma=K|L\in\E_{{\rm int},K}$, then $T_{K,\sigma}$ is that cell 
(``diamond'') whose
vertices are given by $x_K$, $x_L$, and the end points of the edge $\sigma$.
\item If $\sigma\in\E_{{\rm ext},K}$, then $T_{K,\sigma}$ is that cell (``triangle'')
whose vertices are given by $x_K$ and the end points of the edge $\sigma$.
\end{itemize}
The cells $T_{K,\sigma}$ define a partition of $\Omega$. It follows from the
property that the straight line $\overline{x_Kx_L}$ between two neighboring 
centers of cells is orthogonal to the edge $\sigma=K|L$ that
\begin{equation*}
  \m(\sigma)\dist(x_K,x_L) = 2\m(T_{K,\sigma}) \quad\mbox{for }
	\sigma=K|L\in \E_{\rm int}.
\end{equation*}
The approximate gradient of $v\in \mathcal{H}_{\T,\Delta t}$ is then defined by
$$
  \na^{\mathcal D} v(x,t) = \frac{\m(\sigma)}{\m(T_{K,\sigma})}
	(\mathrm{D}_{K,\sigma} v^k)\nu_{K,\sigma}
	\quad\mbox{for }x\in T_{K,\sigma},\ t\in(t_{k-1},t_{k}],
$$
where $\nu_{K,\sigma}$ is the unit vector that is normal to $\sigma$ and points
outwards of $K$.

We introduce a family $(\mathcal{D}_m)_{m\in\N}$ of admissible space-time 
discretizations of $\Omega_T$ indexed by the size 
$\eta_m=\max\{\Delta x_m,\Delta t_m\}$ of the mesh, satisfying $\eta_m\to 0$
as $m\to\infty$. We denote by $\T_m$ the corresponding meshes of $\Omega$ and
by $\Delta t_m$ the corresponding time step sizes.  
Finally, we set $\na^m:=\na^{\mathcal{D}_m}$.

\begin{theorem}[Convergence of the scheme]\label{thm.conv}
Let the assumptions of Theorem \ref{thm.ex} hold, let $(\mathcal{D}_m)_{m\in\N}$ 
be a family of admissible meshes satisfying \eqref{2.dd} uniformly in $m\in\N$,
and assume that $\Delta t_m< 1/C_f$ for $m\in\N$. Let $(u_m)_{m\in\N}$ be a family
of finite-volume solutions to \eqref{2.init}--\eqref{2.usigma} constructed in
Theorem \ref{thm.ex}. Then there exists a function $u=(u_1,\ldots,u_n)\in
L^2(0,T;H^1(\Omega;\R^n))$ satisfying $u_i\ge 0$ in $\Omega_T$, $i=1,\ldots,n$,
\begin{align*}
  u_{i,m}\to u_i &\quad\mbox{strongly in }L^2(\Omega_T), \\
	\na^m u_{i,m}\rightharpoonup\na u_i &\quad\mbox{weakly in }L^2(\Omega_T),
	\mbox{ as }m\to\infty,
\end{align*}
up to a subsequence,
and $u$ is a weak solution to \eqref{1.eq} and \eqref{1.bic}, i.e., for all
$\psi_i\in C_0^\infty(\Omega\times[0,T))$, it holds that
\begin{align*}
  \int_0^T&\int_\Omega u_i\pa_t\psi_i dxdt + \int_\Omega u_i^0(0)\psi_i(0)dx \\
	&= \int_0^T\int_\Omega\sum_{j=1}^n A_{ij}(u)\na u_j\cdot\na\psi_i dxdt
	+ \int_0^T\int_\Omega f_i(u)\psi_i dxdt, \quad i=1,\ldots,n.
\end{align*}
\end{theorem}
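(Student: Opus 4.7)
The approach follows the classical roadmap for convergence of structure-preserving finite-volume schemes: derive uniform a priori bounds from the discrete entropy inequality, obtain strong $L^2$ compactness via a discrete Aubin--Simon argument (together with weak $L^2$ compactness of the reconstructed gradients), and pass to the limit term by term in the weak formulation of \eqref{2.fvm}.

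First, summing the discrete entropy inequality \eqref{2.ei} over $k = 1,\ldots,N_T$ and applying a discrete Gronwall argument (available since $\Delta t_m < 1/C_f$) yields a uniform bound on $\max_k H[u_m^k]$. By the lower bound $h_i(s)\ge c_h(s-1)$ from (H4), this implies an $L^\infty(0,T;L^1(\Omega))$ bound on $u_{i,m}$, while the dissipation term bounds the discrete $L^2(0,T;H^1)$ seminorm $\sum_k\Delta t_m\sum_\sigma\tau_\sigma(\mathrm{D}_\sigma u_i^k)^2$ uniformly in $m$. A discrete Poincar\'e--Wirtinger inequality combined with the $L^1$ control upgrades this to a full discrete $L^2(0,T;H^1(\Omega))$ bound. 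Testing the scheme against arbitrary discrete functions and exploiting the quadratic growth of $f_i$ in (H6) then provides a uniform bound on the discrete time derivative in a negative discrete Sobolev norm (the content of Lemma \ref{lem.est2} referenced in the text). A discrete Aubin--Simon--Gallou\"et--Latch\'e-type compactness lemma yields, up to a subsequence, a limit $u_i \in L^2(0,T;H^1(\Omega))$ with $u_{i,m}\to u_i$ strongly in $L^2(\Omega_T)$ and $\na^m u_{i,m}\rightharpoonup \na u_i$ weakly in $L^2(\Omega_T;\R^2)$. Nonnegativity of $u_i$ is inherited from Theorem \ref{thm.ex} by strong convergence.

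Next, the weak formulation obtained by multiplying \eqref{2.fvm} by $\psi_i(x_K,t_{k-1})$ with $\psi_i\in C_0^\infty(\Omega\times[0,T))$, summing over $K$ and $k$, and performing discrete Abel summation in time and discrete integration by parts in space must be passed to the limit. The time-derivative contribution converges to $-\int_0^T\int_\Omega u_i\pa_t\psi_i\,dx\,dt-\int_\Omega u_i^0\psi_i(0)\,dx$ by strong $L^2$ convergence and smoothness of $\psi_i$; the source term converges by Vitali's theorem using the $L^2$ bound on $u_{i,m}$ and the quadratic growth in (H6). The diffusion term is the delicate one: its flux involves the edge-mean $u_\sigma^k$ from \eqref{2.usigma}--\eqref{2.chain}, whereas the reconstructed gradient $\na^m$ naturally pairs with $A_{ij}$ evaluated at cell values. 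The decomposition
\begin{equation*}
  A_{ij}(u_\sigma^k)\mathrm{D}_{K,\sigma}u_j^k = A_{ij}(u_K^k)\mathrm{D}_{K,\sigma}u_j^k + \bigl(A_{ij}(u_\sigma^k)-A_{ij}(u_K^k)\bigr)\mathrm{D}_{K,\sigma}u_j^k
\end{equation*}
is treated using the Lipschitz continuity of $A$ from (H5) and the sandwich estimate \eqref{2.est.usigma}, which give $|A_{ij}(u_\sigma^k)-A_{ij}(u_K^k)|\le L\,|\mathrm{D}_\sigma u^k|$. After Cauchy--Schwarz on the dual mesh, the remainder is controlled by a power of $\Delta x_m$ times the uniformly bounded discrete $L^2(0,T;H^1)$ norm, and vanishes. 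The principal term passes to the limit as a product of a strongly $L^2$-convergent sequence $A_{ij}(u_K^k)\to A_{ij}(u)$ (by continuity of $A$ and strong convergence of $u_m$) and the weakly $L^2$-convergent sequence $\na^m u_{j,m}\rightharpoonup\na u_j$, yielding $\int_0^T\int_\Omega A_{ij}(u)\na u_j\cdot\na\psi_i\,dx\,dt$.

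The main obstacle lies in this identification of the diffusion limit. The scheme was designed with the edge-mean $u_\sigma^k$ precisely to force discrete entropy dissipation via \eqref{2.chain}, so this evaluation point differs from the cell-centered values that appear naturally in the limit weak formulation. The reconciliation is made possible by the Lipschitz continuity in (H5), which, combined with \eqref{2.est.usigma}, ensures that the discrepancy between $A_{ij}(u_\sigma^k)$ and $A_{ij}(u_K^k)$ behaves like an edge gradient and therefore produces only a vanishing consistency error. A secondary but standard technical point is the consistency of the reconstructed gradient $\na^m$ with the continuous gradient when tested against smooth functions, which is handled by the usual orthogonality property $\m(\sigma)\dist(x_K,x_L)=2\m(T_{K,\sigma})$ recalled just before the definition of $\na^{\mathcal D}$.
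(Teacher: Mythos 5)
Your proposal is correct and follows essentially the same route as the paper: uniform bounds from the discrete entropy inequality, the Gallou\"et--Latch\'e discrete Aubin--Lions lemma for strong $L^2(\Omega_T)$ compactness together with weak convergence of the reconstructed gradients, and the splitting of the discrete diffusion flux into a cell-centered part (strong-times-weak limit, plus consistency of $\na^m$ against the smooth test function) and a remainder controlled by the Lipschitz continuity of $A$ and the sandwich bound \eqref{2.est.usigma}. The only point you gloss over is the discrete Gagliardo--Nirenberg $L^3(\Omega_T)$ estimate of Lemma \ref{lem.est1}, which the paper needs both to upgrade the Aubin--Lions convergence (a priori only in $L^1(0,T;L^2(\Omega))$) to strong convergence in $L^p(\Omega_T)$ for $p<3$ and to get the equi-integrability required for the quadratically growing source term and the dual-norm time-derivative bound.
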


The proof is based on suitable estimates uniform with respect to $\Delta x$ and 
$\Delta t$, derived from the entropy inequality \eqref{2.ei} and
the discrete Gagliardo--Nirenberg inequality, as well as a
version of the Aubin--Lions lemma obtained in \cite{GaLa12}. This yields the
a.e.\ convergence of a sequence $(u_m)$ of solutions to scheme 
\eqref{2.init}--\eqref{2.usigma}. The final step is the identification of
the limit function as a weak solution to \eqref{1.eq} and \eqref{1.bic}.

The last result is the convergence of the discrete solutions, as $k\to\infty$,
to a constant stationary solution when the source terms vanish. For this, let 
$\bar u_i = \m(\Omega)^{-1}\int_\Omega u_i^0dx$ for $i=1,\ldots,n$
and $\bar u=(\bar u_1,\ldots,\bar u_n)$. We introduce for every $k\ge 1$ the
discrete relative entropy 
$$
  H[u^k|\bar u] = \sum_{i=1}^n\sum_{K\in\T}\m(K)
  h_i\bigg(\frac{u_i^k}{\bar u_i}\bigg)\bar u_i.
$$
Observe that since $H[u^k|\bar u]$ distinguishes from $H[u^k]$ only by
linear terms, so the entropy inequality \eqref{2.ei} also holds for the
relative entropy.

\begin{theorem}[Discrete large-time asymptotics]\label{thm.time}
Let $u^k\in\mathcal{H}_\T^n$ be a finite-volume solution to 
\eqref{2.init}--\eqref{2.usigma} for $k\ge 1$. Then
$$
  \sum_{i=1}^n \|u_i^k-\bar u_i\|^2_{0,2,\T}\to 0\quad\mbox{as }k\to\infty.
$$
If the entropy density is defined by \eqref{1.h} and the entropy inequality
is given by
\begin{equation}\label{2.ei2}
  H[u^k|\bar u] + \Delta t\sum_{i=1}^n\sum_{\sigma\in\E}\tau_\sigma
  \big(c_A (\mathrm{D}_\sigma u^k_i)^2 
  + 4 c_A'(\mathrm{D}_\sigma (u^k_i)^{1/2})^2 \big)
  \le H[u^{k-1}|\bar u],
\end{equation}
where $c'_A>0$, then there exist constants $\kappa>0$ (depending on $u^0$) and
 $\lambda>0$ (depending on $c'_A$, $u^0$, and $\zeta$) such that
$$
  \sum_{i=1}^n \pi_i \|u_i^k-\bar u_i\|^2_{0,1,\T} 
  \le \kappa H[u^0|\bar u]e^{-\lambda t_k}
  \quad\mbox{for all }k\ge 1.
$$
\end{theorem}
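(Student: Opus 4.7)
The plan is to treat the two claims separately. For the first claim, convergence of the mean values to the fixed vector $\bar u$ implicitly assumes $f_i\equiv 0$ (so that mass is preserved); the relative-entropy form of \eqref{2.ei} then reads
\[
  H[u^k|\bar u]+c_A\Delta t\sum_{i=1}^n\sum_{\sigma\in\E}\tau_\sigma(\mathrm{D}_\sigma u_i^k)^2\le H[u^{k-1}|\bar u],
\]
so that $(H[u^k|\bar u])_k$ is nonincreasing and nonnegative, and telescoping gives $\sum_{k\ge 1}\Delta t\sum_i\sum_\sigma\tau_\sigma(\mathrm{D}_\sigma u_i^k)^2<\infty$. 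I would extract a subsequence $(k_m)$ along which the seminorm sum tends to zero and apply the discrete Poincar\'e--Wirtinger inequality (available because mass conservation forces $u_i^k-\bar u_i$ to have zero integral mean) to obtain $\|u_i^{k_m}-\bar u_i\|_{0,2,\T}\to 0$. On the fixed finite-dimensional space $\mathcal H_\T^n$, the functional $H[\cdot|\bar u]$ is continuous, strictly convex, and attains its unique minimum at $\bar u$; together with the monotonicity of $H[u^k|\bar u]$, a subsequence-of-every-subsequence argument upgrades this to convergence of the entire sequence.

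For the second claim I would apply the classical entropy--entropy-production method. The key step is to establish a discrete log-Sobolev-type functional inequality
\[
  H[u^k|\bar u]\le C\sum_{i=1}^n\sum_{\sigma\in\E}\tau_\sigma(\mathrm{D}_\sigma(u_i^k)^{1/2})^2,
\]
with $C$ depending only on $u^0$ (through $\bar u$, $\m(\Omega)$, and the a priori entropy bound $H[u^0|\bar u]$) and on the mesh regularity constant $\zeta$. Combined with \eqref{2.ei2}, this produces the contraction $(1+4c'_A\Delta t/C)H[u^k|\bar u]\le H[u^{k-1}|\bar u]$; iterating yields $H[u^k|\bar u]\le H[u^0|\bar u]\,e^{-\lambda t_k}$ with $\lambda=\lambda(c'_A,u^0,\zeta)>0$ bounded away from zero uniformly in $\Delta t$ on any bounded interval. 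A discrete Csisz\'ar--Kullback--Pinsker inequality for the logarithmic entropy \eqref{1.h}, of the form $\sum_i\pi_i\|u_i^k-\bar u_i\|_{0,1,\T}^2 \le C_{\mathrm{CK}}(\bar u,\m(\Omega))\,H[u^k|\bar u]$, then converts the entropy decay into the announced $L^1$-decay, with $\kappa=\kappa(u^0)$ and the same $\lambda$.

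The main obstacle is the discrete log-Sobolev inequality. The continuous version on a bounded domain with Neumann boundary is classical, but assembling a discrete analogue on an admissible finite-volume mesh is delicate because $\phi(s)=s\log s-s+1$ is not uniformly comparable to $(\sqrt s-1)^2$: the ratio $\phi(s)/(\sqrt s-1)^2$ diverges as $s\to\infty$, so a pointwise bound fails. I would therefore split the state space at a threshold: on the bounded regime, bound $\bar u_i\phi(u_i/\bar u_i)$ directly by $|\sqrt{u_i^k}-\sqrt{\bar u_i}|^2$, controlled via discrete Poincar\'e--Wirtinger applied to $\sqrt{u_i^k}-\bar M_i^k$ (with $\bar M_i^k$ the piecewise-constant mean of $\sqrt{u_i^k}$) together with Jensen's inequality to relate $\bar M_i^k$ to $\sqrt{\bar u_i}$; on the high-density regime, exploit the discrete Gagliardo--Nirenberg--Sobolev embedding in two dimensions to trade logarithmic growth against an $L^p$-bound derived from the a priori mass and entropy bounds. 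Matching the two regimes at the threshold, against $H[u^0|\bar u]$, is what produces the announced dependence of $\kappa$ on $u^0$ and of $\lambda$ on $(c'_A,u^0,\zeta)$.
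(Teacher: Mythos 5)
Your overall architecture coincides with the paper's proof. For the first claim: mass conservation (sources off), telescoping the entropy inequality, and the discrete Poincar\'e--Wirtinger inequality --- though your detour through subsequences, strict convexity and minimality of $H[\cdot|\bar u]$ is unnecessary: a convergent series of nonnegative terms already forces $\sum_i|u_i^k|_{1,2,\T}^2\to0$ for the whole sequence, and Poincar\'e--Wirtinger then gives the $L^2$ convergence directly. For the second claim you use exactly the paper's chain: entropy--entropy production from \eqref{2.ei2}, a discrete logarithmic Sobolev inequality, iteration of the resulting contraction, and a discrete Csisz\'ar--Kullback--Pinsker inequality; the paper simply cites \cite[Prop.~5.3]{CCHK20} for the first ingredient and adapts \cite[Theorem A.3]{Jue16} (constant $2\max_i\bar u_i$) for the second.

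The genuine gap is in your plan to prove the discrete log-Sobolev inequality by a bounded/high-density splitting. For the exponential rate you need $H[u^k|\bar u]\le C\,F^k$ with $F^k=\sum_i\sum_\sigma\tau_\sigma(\mathrm{D}_\sigma(u_i^k)^{1/2})^2$, i.e.\ a bound \emph{linear} in $F^k$ uniformly as $F^k\to 0$ (for large $F^k$ the monotone bound $H[u^k|\bar u]\le H[u^0|\bar u]$ makes linearity trivial, which is where the $u^0$-dependence may legitimately enter). Your splitting cannot deliver this: on the bounded regime the comparison constant between $\bar u_i\,\phi(u_i/\bar u_i)$ and $(\sqrt{u_i}-\sqrt{\bar u_i})^2$ grows like the logarithm of the threshold, while on the high-density regime Gagliardo--Nirenberg combined with the mass bound only produces sublinear powers of $F^k$ (e.g.\ $\int u^{1+\eps}\lesssim \mathrm{mass}\cdot(F+\mathrm{mass})^{\eps}$, and the measure of the high set is only $O(F/\bar u)$). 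Optimizing the threshold then yields at best $H\lesssim F\log(1/F)$ for small $F$, which, inserted into \eqref{2.ei2}, gives sub-exponential decay of the type $e^{-c\sqrt{t_k}}$ rather than the claimed rate $e^{-\lambda t_k}$. To obtain the linear bound one needs the genuine log-Sobolev mechanism --- for instance Jensen's inequality applied inside the logarithm, bounding the entropy by $\mathrm{mass}\cdot\log\big(1+C F/\mathrm{mass}\big)$ and then using $\log(1+x)\le x$ --- or one simply invokes the known discrete inequality \cite[Prop.~5.3]{CCHK20}, as the paper does. With that inequality in hand, the remainder of your argument (contraction, iteration, CKP) goes through as written.
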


The proof of Theorem \ref{thm.time} is based on the entropy inequality 
\eqref{2.ei2} and some discrete functional inequalities and is rather standard. 
Inequality \eqref{2.ei2} follows if we assume that the matrix $A(u)$ 
satisfies for all $z \in \R^n$ and $u\in(0,\infty)^n$,
\begin{equation}\label{2.assum.A}
  z^\top h''(u) A(u) z \geq c_A |z|^2 + c'_{A} \sum_{i=1}^n \frac{z_i^2}{u_i}.
\end{equation}
This can be seen by slightly modifying the proof of Theorem \ref{thm.ex};
see Remark \ref{rem.ei2}.
All the assumptions of the theorem are fulfilled by the SKT model if
$a_{i0}>0$ for all $i=1,\ldots,n$ \cite[Lemma 4, Lemma 6]{CDJ18}.
When the Lotka--Volterra terms do not vanish, nonconstant steady states
are possible, and we present some numerical illustrations in this direction
in Section \ref{ssec.pattern}.


\section{Examples}\label{sec.exam}

We present several examples for which Hypotheses (H4)--(H6) are satisfied.
The examples include the SKT model.

\subsection{The $n$-species SKT cross-diffusion system}

Consider system \eqref{1.eq}--\eqref{1.bic}. The entropy density defined by
\eqref{1.h} satisfies Hypothesis (H4). Hypothesis (H5) is satisfied 
if $a_{ii}>0$ for all $i=1,\ldots,n$ and the detailed-balance condition 
\begin{equation}\label{skt1}
  \pi_i a_{ij} = \pi_j a_{ji} \quad\mbox{for all }i\neq j,
\end{equation}
holds, or if self-diffusion dominates cross-diffusion in the sense
\begin{equation}\label{skt2}
  \eta_0 := \min_{i=1,\ldots,n}\bigg(a_{ii}-\frac14\sum_{j=1}^n
	\big(\sqrt{a_{ij}}-\sqrt{a_{ji}}\big)^2\bigg) > 0
\end{equation}
and $\pi_i=1$ for $i=1,\ldots,n$; see Lemmas 4 and 6 in \cite{CDJ18}. 
In the former case, $c_A=\min_i \pi_ia_{ii}>0$ and in the latter case, $c_A=2\eta_0>0$.
The Lotka--Volterra source terms \eqref{1.f} satify Hypothesis (H6) with 
$C_f$ given by
\begin{equation}\label{skt.defCf}
  C_f = \frac{2}{\log 2}\max_{i=1,\ldots,n} \bigg(b_{i0}
	+ \frac{1}{e\pi_i}\sum_{j=1}^n \pi_j b_{ji} \bigg)
\end{equation}
(see Appendix \ref{app} for a proof). The existence of a constant $C'_f > 0$ 
such that
$$
\sum_{i=1}^n|f_i(u)|\le C'_f\bigg(1+\sum_{i=1}^n|u_i|^2\bigg)
$$
is clear since $f_i$ is growing at most as $u_j^2$.
This shows that Hypotheses (H4)--(H6) are fulfilled, 
and we have the following result.

\begin{corollary}
Let $a_{ii}>0$, $b_{ii}>0$ for $i=1,\ldots,n$ and let the diffusion matrix,
source terms, and entropy density be defined by \eqref{1.A}, \eqref{1.f}, and
\eqref{1.h}, respectively. We assume that \eqref{skt1} or \eqref{skt2} holds and
that $\Delta t<1/C_f$. Then there exists a finite-volume solution to scheme
\eqref{2.init}--\eqref{2.usigma} satisfying \eqref{2.ei}. Under the assumptions
of Theorem \ref{thm.conv}, the solutions associated to the meshes $(\mathcal{D}_m)$
converge to a solution to \eqref{1.eq}--\eqref{1.bic}, up to a subsequence.
\end{corollary}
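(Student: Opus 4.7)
The plan is to recognize this corollary as a direct specialization of Theorems \ref{thm.ex} and \ref{thm.conv} to the SKT system, so the only substantive work is to verify that the concrete choice of $(A_{ij},f_i,h)$ given by \eqref{1.A}, \eqref{1.f}, \eqref{1.h} meets Hypotheses (H4)--(H6). Hypotheses (H1)--(H3), being standing assumptions on the domain, the mesh, and the initial data, are independent of the model and require no model-specific check. Once (H4)--(H6) are in hand, Theorem \ref{thm.ex} yields the discrete solution together with the entropy inequality \eqref{2.ei}, and Theorem \ref{thm.conv} yields the convergence of $(u_m)$ to a weak solution of \eqref{1.eq}--\eqref{1.bic} along a subsequence.

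To dispatch (H4), I would verify the single-variable conditions for $h_i(s)=\pi_i(s(\log s-1)+1)$: the separable structure $h(u)=\sum_i h_i(u_i)$ is built in, $h_i\in C^2((0,\infty))\cap C^0([0,\infty))$, $h_i''(s)=\pi_i/s>0$ (so $h_i$ is convex), $h_i'(s)=\pi_i\log s$ is a bijection of $(0,\infty)$ onto $\R$ (so invertible), and $(h_i')''(s)=-\pi_i/s^2<0$ (so $h_i'$ is strictly concave). The linear lower bound on $h_i$ is an elementary one-variable computation using a suitable affine minorant of $h_i$.

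Hypothesis (H5) is essentially already known: the coercivity $z^\top h''(u)A(u)z\ge c_A|z|^2$ with $c_A=\min_i\pi_i a_{ii}$ under \eqref{skt1}, or $c_A=2\eta_0$ under \eqref{skt2}, is exactly Lemmas 4 and 6 of \cite{CDJ18}, which I would cite rather than reprove. Lipschitz continuity of $A_{ij}$ on bounded sets is immediate because $A_{ij}(u)$ is affine in $u$. Hypothesis (H6) has two parts: the quadratic growth bound on $\sum_i|f_i(u)|$ is transparent from the Lotka--Volterra form of $f_i$ (each $f_i$ is a polynomial of degree two in $u$), while the entropy-source bound $\sum_i f_i(u)h_i'(u_i)\le C_f(1+h(u))$ with the explicit $C_f$ of \eqref{skt.defCf} is the only nontrivial ingredient and is deferred to Appendix \ref{app}.

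The main obstacle, as the excerpt itself foreshadows, will be pinning down the explicit constant $C_f$ in \eqref{skt.defCf}: one has to control the signed, unbounded factor $h_i'(u_i)=\pi_i\log u_i$ against the quadratic terms $b_{ij}u_iu_j$ after symmetrization through the $\pi$-weights, which is where the prefactor $\tfrac{2}{\log 2}$ and the weights $1/(e\pi_i)$ come from (the appendix likely exploits the standard inequalities $s\log s\le s^2/\log 2$ for $s\ge 0$ and $-s\log s\le 1/e$ on $[0,1]$). Everything else is bookkeeping, so once that explicit estimate is in place the corollary follows by a direct application of Theorems \ref{thm.ex} and \ref{thm.conv} to the SKT data.
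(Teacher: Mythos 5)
Your proposal is correct and follows essentially the same route as the paper: its Section 3.1 verifies (H4)--(H6) for the SKT data exactly as you do --- (H5) by citing Lemmas 4 and 6 of \cite{CDJ18} with $c_A=\min_i\pi_i a_{ii}$ resp.\ $c_A=2\eta_0$, the quadratic growth of $f_i$ by inspection, and the entropy-source bound with the explicit $C_f$ of \eqref{skt.defCf} deferred to Appendix \ref{app} --- and then invokes Theorems \ref{thm.ex} and \ref{thm.conv}. (The appendix actually uses $-s\log s\le e^{-1}$ together with $s\le(1+g(s))/\log 2$ for $g(s)=s(\log s-1)+1$ rather than the inequalities you guessed, but since you defer that computation to the appendix, as the paper does, this is immaterial.)
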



\subsection{A cross-diffusion system for fluid mixtures}

It was shown in \cite{CDJ19} that the mean-field limit in a stochastic interacting
particle system leads to the cross-diffusion system \eqref{1.eq}
with diffusion coefficients
$$
  A_{ij} = \delta_{ij} a_{i0} + a_{ij}u_i, \quad
	i,j=1,\ldots,n,
$$
where $a_{ij}\ge 0$, and with vanishing source terms, $f_i=0$. 
This system is similar to the $n$-species SKT-type model,
but the diagonal diffusion is smaller. 
We choose the entropy density \eqref{1.h} with $\pi_i$ 
satisfying the detailed-balance condition \eqref{skt1} and assume that the
matrix $(\pi_i a_{ij})$ is positive definite with smallest eigenvalue $\lambda_0>0$.
Then Hypothesis (H5) is satisfied since
$$
  z^\top h''(u)A(u)z = \sum_{i=1}^n\frac{\pi_i}{u_i}z_i^2
	+ \sum_{i,j=1}^n(\pi_i a_{ij})z_iz_j \ge \lambda_0|z|^2,
$$
for all $z=(z_1,\dots,z_n)\in\R^n$. Thus, Hypotheses (H4)--(H5) are fulfilled.
A finite-volume scheme for this system has been already analyzed
in \cite{JuZu20}. However, the design and the analysis of the scheme are based 
on a weighted quadratic entropy, i.e.\ an entropy not of the form 
$\sum_{i=1}^n h_i(u_i)$.


\subsection{A cross-diffusion model for seawater intrusion}

The seawater intrusion model analyzed in \cite{Oul18} describes the evolution
of the height $u_1$ of freshwater and the height $u_2$ of saltwater in
a porous medium. The asymptotic limit of vanishing aspect ratio between
the thickness and the horizontal length of the porous medium in a Darcy transport
model leads to the cross-diffusion system \eqref{1.eq} with diffusion coefficients
$$
  A(u) = \begin{pmatrix}
	\delta u_1 & \delta u_1 \\ \delta u_2 & u_2
	\end{pmatrix},
$$
where $\delta\in(0,1)$ is the ratio of the freshwater and saltwater density, 
and with no source terms.
The original model contains a variable bottom $b(x)$ of the porous medium;
we assume for simplicity that the bottom is flat, $b(x)=0$. Our arguments
also hold for nonconstant functions $b(x)$ if $\na b\in L^\infty(\Omega)$.
The entropy density is given by
$$
  h(u) = \frac{1}{\delta}\big(u_1(\log u_1-1)+1\big) 
	+ \big(u_2(\log u_2-1)+1\big),
$$
and a computation shows that 
$$
  z^\top h''(u)A(u)z = \frac12(1-\delta)(z_1^2+z_2^2) + \frac12(1+\delta)(z_1+z_2)^2
  \ge \frac12(1-\delta)|z|^2,
$$
for $z\in\R^2$. We infer that Hypotheses (H4)--(H5) are fulfilled.

An entropy-dissipating finite-volume scheme, based on a two-point approximation 
with upwind mobilities, was already suggested and
analyzed in \cite{Oul18} using similar techniques as in our paper. However,
our analysis allows us to recast this model in a more general framework.


\subsection{A Keller--Segel system with additional cross-diffusion}

It is well known that the parabolic-parabolic Keller--Segel model may lead
to finite-time blow-up of weak solutions \cite{BDP06}. Adding cross-diffusion
in the equation for the chemical signal allows for global weak solutions,
which may help to approximate the Keller--Segel system close to the blow-up time.
The evolution of the cell density $u_1$ and the chemical concentration $u_2$ is
governed by equations \eqref{1.eq} in two space dimensions with $f_1(u)=0$ and 
$f_2(u)=u_1-u_2$ and with the diffusion matrix 
(take $m=2$ and $n=1$ in \cite{CHJ12})
$$
  A(u) = \begin{pmatrix} 2u_1 & -u_1 \\ \delta & 1 \end{pmatrix},
$$
where $\delta>0$ describes the strength of cross-diffusion (and
can be arbitrarily small). The associated entropy density
given by
$$
  h(u) = h_1(u_1) + h_2(u_2) = \big(u_1(\log u_1-1)+1\big) + \frac{1}{2\delta}u_2^2
$$
does {\em not} satisfy Hypothesis (H4), since $h'_2:(0,\infty)\to\R$,
$h'_2(u_2)=u_2/\delta$, is not invertible, but it satisfies Hypothesis (H5):
$$
  z^\top h''(u)A(u) = 2z_1^2 + \delta^{-1}z_2^2 \ge \min\{2,\delta^{-1}\}|z|^2
		\quad\mbox{for }z\in\R^2.
$$
Hypothesis (H6) is satisfied since the elementary inequalities 
$u_1\log u_2\le u_1\log u_1 - u_1 + u_2$ for $u_1$, $u_2>0$ and $-u_2\log u_2
\le e^{-1}$ imply that $f_2(u)\log u_2\le C(1+h(u))$.
Although, formally, we cannot apply the results of the previous
section, the technique still applies by defining $h_2'$ as a function from $\R$ 
to $\R$. We note that $u_2$ cannot be proven to be nonnegative, even not on
the continuous level. However, the concentration $u_2$ 
becomes nonnegative in the limit $\delta\to 0$.


\section{Proof of Theorem \ref{thm.ex}}\label{sec.ex}

We prove Theorem \ref{thm.ex} by induction. If $k=0$, we have $u_{i,K}^0\ge 0$
for all $K\in\T$, $i=1,\ldots,n$ by assumption (H3). 
Assume that there exists a solution $u^{k-1}$
to \eqref{2.fvm}--\eqref{2.usigma} satisfying $u_{i,K}^{k-1}\ge 0$ for
$K\in\T$, $i=1,\ldots,n$. The construction of $u^k$ is split into several steps.

{\em Step 1: Definition of a linearized problem.} Let $R>0$ and $\eps>0$. We define
the set
$$
  Z_R = \big\{w=(w_{1},\ldots, w_{n}) \in \mathcal{H}_\T^n : 
	\|w_{i}\|_{1,2,\T}<R\mbox{ for }i=1,\ldots,n\big\},
$$
and the mapping $F_\eps:Z_R\to\R^{n\theta}$, $F_\eps(w)=w^\eps$, with $\theta=\#\T$
and $w^\eps=(w^\eps_1,\ldots,w^\eps_n)$ is the solution to the linear problem
\begin{equation}\label{3.lin}
  \eps\sum_{\sigma\in\E_K}\tau_\sigma \text{D}_{K,\sigma}w_i^\eps
	- \eps\m(K) w^\eps_{i,K}
  = \frac{\m(K)}{\Delta t}(u_{i,K}-u_{i,K}^{k-1})
	+ \sum_{\sigma\in\E_K}\mathcal{F}_{i,K,\sigma} - \m(K) f_i(u_K),
\end{equation}
for $K\in\T$ and $i=1,\ldots,n$, where $\mathcal{F}_{i,K,\sigma}$ is defined in
in \eqref{2.flux} and $u_{i,K}=(h_i')^{-1}(w_{i,K})>0$ is a function of $w$.
The existence of a unique solution $w^\eps$ to this problem is a consequence of
the proof of \cite[Lemma 9.2]{EGH00}. 

{\em Step 2: Continuity of $F_\eps$.} We fix $i\in\{1,\ldots,n\}$, multiply
\eqref{3.lin} by $w_{i,K}^\eps$, sum over $K\in\T$, and apply discrete integration
by parts:
\begin{align}
  \eps \|w^\eps_{i}\|^2_{1,2,\T}
	&= -\sum_{K \in \T}\frac{\m(K)}{\Delta t}(u_{i,K}-u_{i,K}^{k-1}) w^{\eps}_{i,K} 
	+ \sum_{\substack{\sigma\in\E_{\mathrm{int}} \\ \sigma=K|L}}\mathcal{F}_{i,K,\sigma} 
	\text{D}_{K,\sigma} w_{i}^\eps \nonumber \\
	&\phantom{xx}{}+ \sum_{K\in\T} \m(K) f_i(u_K) w^\eps_{i,K} 
	=: J_1 + J_2 + J_3. \label{3.aux1}
\end{align}
By the Cauchy--Schwarz inequality and definition \eqref{2.flux} of 
$\mathcal{F}_{i,K,\sigma}$, we find that
\begin{align*}
  J_1 &\le \frac{1}{\Delta t}\|u_i-u_i^{k-1}\|_{0,2,\T}\|w_i^\eps\|_{0,2,\T}, \\
	J_2 &\le \bigg(\sum_{j=1}^n\sum_{\sigma\in\E}\tau_\sigma A_{ij}(u_\sigma)
	(\text{D}_\sigma u_j)^2\bigg)^{1/2}\bigg(\sum_{\sigma\in\E}\tau_\sigma
	(\text{D}_\sigma w_i^\eps)^2\bigg)^{1/2}, \\
	J_3 &\le \|f_i(u)\|_{0,2,\T}\|w_i^\eps\|_{0,2,\T}.
\end{align*}
Since $w\in Z_R$ is bounded, so does $u\in\mathcal{H}_\T^n$. Thus, there exists
a constant $C(R)>0$ independent of $w^\eps$ such that $J_i\le C(R)\|w_i^\eps\|_{1,2,\T}$.
We deduce from \eqref{3.aux1} that
$$
  \eps\|w_i^\eps\|_{1,2,\T} \le C(R).
$$

We now turn to the proof of continuity of $F_\eps$. Let $(w^m)_{m\in\N}\in Z_R$
be such that $w^m\to w$ as $m\to\infty$. The previous bound shows that 
$w^{\eps,m}=F_\eps(w^m)$ is uniformly bounded. By the theorem of 
Bolzano--Weierstra{\ss}, there exists a subsequence of
$(w^{\eps,m})$, which is not relabeled, such that $w^{\eps,m}\to w^\eps$ as
$m\to\infty$. Passing to the limit in scheme \eqref{3.lin} and taking into account
the continuity of the nonlinear functions, we see that $w_i^\eps$ is a solution
to \eqref{3.lin} for all $i=1,\ldots,n$, and it holds that $w^\eps=F_\eps(w)$.
Because of the uniqueness of the limit function, the whole sequence converges,
which proves the continuity.

{\em Step 3: Existence of a fixed point.} We claim that $F_\eps$ admits a fixed point.
We use a topological degree argument \cite[Chap.~1]{Dei85}, i.e., we prove that 
$\operatorname{deg}(I-F_\eps,Z_R,0) = 1$, where $\operatorname{deg}$ is the 
Brouwer topological degree. 
Since $\operatorname{deg}$ is invariant by homotopy, it is sufficient to prove that 
any solution $(w^\eps,\rho)\in \overline{Z}_R\times[0,1]$ to the fixed-point equation 
$w^\eps = \rho F_\eps(w^\eps)$ satisfies $(w^\eps,\rho)\not\in\pa Z_R\times[0,1]$ 
for sufficiently large values of $R>0$. Let $(w^\eps,\rho)$ be a fixed point 
and $\rho\neq 0$ (the case $\rho=0$ is clear). Then $w^\eps$ solves
\begin{equation}\label{3.approx}
  \eps\sum_{\sigma\in\E_K}\tau_\sigma \text{D}_{K,\sigma}w_i^\eps 
	- \eps\m(K) w^\eps_{i,K}
  = \rho\bigg(\frac{\m(K)}{\Delta t}(u^\eps_{i,K}-u_{i,K}^{k-1})
	+ \sum_{\sigma\in\E_K}\mathcal{F}^\eps_{i,K,\sigma} - \m(K)f_i(u^\eps_{i,K})\bigg),
\end{equation}
for all $K\in\T$ and $i=1,\ldots,n$, where $u^\eps_{i,K}=(h_i')^{-1}(w^\eps_{i,K})>0$
and $\mathcal{F}_{i,K,\sigma}^\eps$
is defined as in \eqref{2.flux} with $u$ replaced by $u^\eps$.
The following discrete entropy inequality is the key argument.

\begin{lemma}[Discrete entropy inequality]\label{lem.ei}
Let the assumptions of Theorem \ref{thm.ex} hold, let $0<\rho\le 1$, $\eps>0$, and
let $u^\eps$ be a solution to \eqref{3.approx}. Then
\begin{align}
  \rho(1-C_f\Delta t)H[u^\eps] &+ \eps\Delta t\sum_{i=1}^n\|w_i^\eps\|_{1,2,\T}^2 
	\nonumber \\
	&{}+ \rho c_A\Delta t\sum_{i=1}^n\sum_{\sigma\in\E}\tau_\sigma
	(\mathrm{D}_\sigma u_i^\eps)^2 \le \rho H[u^{k-1}] + \rho C_f\Delta t\m(\Omega).
	\label{3.ei}
\end{align}
\end{lemma}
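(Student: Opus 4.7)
\textbf{Proof plan for Lemma \ref{lem.ei}.}
The natural test function in the continuous setting is $h'(u)$, so on the discrete side the strategy is to multiply the scheme \eqref{3.approx} by $\Delta t\, w_{i,K}^\eps$ (recall $w_{i,K}^\eps=h_i'(u_{i,K}^\eps)$), sum over $K\in\T$ and $i=1,\ldots,n$, and process the four resulting groups of terms separately: the regularization, the discrete time derivative, the flux term, and the source term.

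First, for the regularization, a discrete integration by parts together with the fact that boundary edges contribute zero (since $\textrm{D}_{K,\sigma}w_i^\eps=0$ there) produces exactly
$$
  \eps\Delta t\sum_{i=1}^n w_{i,K}^\eps\bigg(\sum_{\sigma\in\E_K}\tau_\sigma \textrm{D}_{K,\sigma}w_i^\eps-\m(K)w_{i,K}^\eps\bigg)
  = -\eps\Delta t\sum_{i=1}^n\|w_i^\eps\|_{1,2,\T}^2,
$$
after recognizing $|w_i^\eps|_{1,2,\T}^2+\|w_i^\eps\|_{0,2,\T}^2$. For the discrete time derivative, convexity of $h_i$ gives $(u_{i,K}^\eps-u_{i,K}^{k-1})h_i'(u_{i,K}^\eps)\ge h_i(u_{i,K}^\eps)-h_i(u_{i,K}^{k-1})$, so the contribution is bounded below by $\rho(H[u^\eps]-H[u^{k-1}])$.

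The flux term is the heart of the argument. Using the conservativity identity $\mathcal{F}_{i,L,\sigma}^\eps=-\mathcal{F}_{i,K,\sigma}^\eps$ for $\sigma=K|L$ and the vanishing of fluxes on boundary edges, a discrete integration by parts rewrites the term as
$$
  \sum_{i=1}^n\sum_{K\in\T}w_{i,K}^\eps\sum_{\sigma\in\E_K}\mathcal{F}_{i,K,\sigma}^\eps
  = \sum_{\sigma\in\E_{\rm int}}\sum_{i,j=1}^n\tau_\sigma A_{ij}(u_\sigma^\eps)\,\textrm{D}_{K,\sigma}u_j^\eps\,\textrm{D}_{K,\sigma}w_i^\eps.
$$
This is the decisive moment where the definition \eqref{2.usigma}--\eqref{2.chain} of the mean $u_\sigma^\eps$ pays off: the discrete chain rule $\textrm{D}_{K,\sigma}w_i^\eps=h_i''(u_{i,\sigma}^\eps)\,\textrm{D}_{K,\sigma}u_i^\eps$ converts the bilinear form into $\sum_{i,j}h_i''(u_{i,\sigma}^\eps)A_{ij}(u_\sigma^\eps)z_iz_j$ with $z_i=\textrm{D}_{K,\sigma}u_i^\eps$. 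Since $h''(u_\sigma^\eps)$ is diagonal by Hypothesis (H4), this is precisely $z^\top h''(u_\sigma^\eps)A(u_\sigma^\eps)z$, and Hypothesis (H5) yields the lower bound $c_A\sum_i(\textrm{D}_\sigma u_i^\eps)^2$. This is the step I expect to be the main obstacle, because one must verify carefully that the mean-value quantity used for the chain rule coincides with the evaluation point of the diffusion matrix in the flux, and check that degenerate cases $u_{i,K}^\eps=0$ or $u_{i,K,\sigma}^\eps=0$ (where $u_{i,\sigma}^\eps=0$) cause no trouble because then $\textrm{D}_{K,\sigma}u_i^\eps$ contributes zero to the bilinear form consistently with the chain rule.

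Finally, the source term is controlled by the first bound in Hypothesis (H6), giving
$$
  -\rho\Delta t\sum_{K\in\T}\m(K)\sum_{i=1}^n f_i(u_K^\eps)h_i'(u_{i,K}^\eps)
  \ge -\rho C_f\Delta t\bigl(\m(\Omega)+H[u^\eps]\bigr).
$$
Combining the four contributions and moving the $H[u^\eps]$ term from the source estimate to the left-hand side yields exactly the asserted inequality \eqref{3.ei} after rearrangement.
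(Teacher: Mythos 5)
Your proposal is correct and follows essentially the same route as the paper: multiply the regularized scheme by $\Delta t\,w_{i,K}^\eps$, sum, integrate by parts, use convexity of $h_i$ for the time-difference term, the discrete chain rule \eqref{2.chain} together with Hypothesis (H5) for the flux term, and Hypothesis (H6) for the source term. The only cosmetic difference is your discussion of the degenerate cases $u_{i,K}^\eps=0$, which in fact cannot occur here since $u_{i,K}^\eps=(h_i')^{-1}(w_{i,K}^\eps)>0$ in the regularized problem.
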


\begin{proof}
We multiply \eqref{3.approx} by $\Delta t w_{i,K}^\eps$ and sum over $i=1,\ldots,n$
and $K\in\T$. Then, after a discrete integration by parts, 
\begin{align*}
  & \eps\Delta t\sum_{i=1}^n \|w_{i}^\eps\|^2_{1,2,\T}
	+ J_4 + J_5 + J_6 = 0, \quad\mbox{where} \\
	& J_4 = \rho\sum_{i=1}^n\sum_{K\in\T}\m(K)(u_{i,K}^\eps-u_{i,K}^{k-1})w_{i,K}^\eps, \\
	& J_5 = \rho \Delta t \sum_{i,j=1}^n \sum_{\substack{\sigma\in\E_{\rm int} 
	\\ \sigma=K|L}} \tau_\sigma A_{ij}(u^\eps_\sigma)
	\text{D}_{K,\sigma} u_j^\eps \text{D}_{K,\sigma} w_{i}^\eps, \\
	& J_6 = \rho \Delta t \sum_{i=1}^n \sum_{K \in \T} \m(K) f_i(u^\eps_K) w^\eps_{i,K}.
\end{align*}
Since $h_i$ is assumed to be convex, we have
\begin{align*}
  J_4 &= \rho\sum_{i=1}^n\sum_{K\in\T}\m(K)(u_{i,K}^\eps-u_{i,K}^{k-1})
	h_i'(u_{i,K}^\eps) \\
	&\ge \rho\sum_{i=1}^n\sum_{K\in\T}\m(K)\big(h_i(u_{i,K}^\eps)-h_i(u_{i,K}^{k-1})\big)
	= \rho\big(H[u^\eps]-H[u^{k-1}]\big).
\end{align*}
We deduce from $w_i^\eps=h_i'(u_i^\eps)$, the discrete chain rule \eqref{2.chain}, 
and Hypothesis (H5) that
\begin{align*}
  J_5 &= \rho\Delta t\sum_{i,j=1}^n\sum_{\substack{\sigma\in\E_{\rm int} \\ \sigma=K|L}}
  \tau_\sigma A_{ij}(u_\sigma^\eps)\text{D}_{K,\sigma}(u_j^\eps)
	\text{D}h_i'(u_i^\eps) \\
	&= \rho\Delta t\sum_{i,j=1}^n\sum_{\substack{\sigma\in\E_{\rm int} \\ \sigma=K|L}}
  \tau_\sigma h''_i(u_{i,\sigma}^\eps) A_{ij}(u_\sigma^\eps)
	\text{D}_{K,\sigma}u_i^\eps\text{D}_{K,\sigma}u_j^\eps \\
  &\ge \rho c_A\Delta t\sum_{i=1}^n\sum_{\sigma\in\E}\tau_\sigma
	(\text{D}_\sigma u_i^\eps)^2.
\end{align*}
Finally, by Hypothesis (H6),
$$
  J_6 \ge -\rho C_f\Delta t\sum_{K\in\T}\m(K)(1+h(u_K^\eps))
	= -\rho C_f\Delta t H[u^\eps] - \rho C_f\Delta t\m(\Omega).
$$
This completes the proof.
\end{proof}

We proceed with the topological degree argument. Set
$$
  R = \frac{1}{\sqrt{\eps\Delta t}} 
	\big(H[u^{k-1}]+C_f \Delta t \m(\Omega)\big)^{1/2}+1.
$$
The previous lemma implies that
$$
  \eps\Delta t\sum_{i=1}^n\|w_i^\eps\|^2_{1,2,\T}
	\le \rho \big( H[u^{k-1}] +C_f \Delta t \m(\Omega) \big) 
	\le \eps\Delta t(R-1)^2,
$$
which gives $\sum_{i=1}^n\|w_i^\eps\|^2_{1,2,\T}<R^2$.
We conclude that $w^\eps \not\in\pa Z_R$ and $\operatorname{deg}(I-F_\eps,Z_R,0)=1$. 
Thus, $F_\eps$ admits at least one fixed point.

{\em Step 4: Limit $\eps\to 0$.}
We deduce from Hypothesis (H4), Lemma \ref{lem.ei}, and $\Delta t<1/C_f$ 
that for any $K\in\T$ and $i=1,\ldots,n$,
$$
  c_h \m(K)(u^\eps_{i,K}-1) \leq \m(K)h_i(u_{i,K}^\eps)
	\le H[u^\eps] \le  \frac{H[u^{k-1}] + C_f\Delta t\m(\Omega)}{1-C_f\Delta t}.
$$
This shows that $(u_{i,K}^\eps)$ is bounded uniformly in $\eps$. Therefore,
there exists a subsequence (not relabeled) such that $u_{i,K}^\eps\to u_{i,K}$
as $\eps\to 0$. Lemma \ref{lem.ei} implies the existence of a subsequence
such that $\eps w_{i,K}^\eps\to 0$. Hence, performing the limit $\eps\to 0$
in \eqref{3.approx}, we deduce the existence of a solution to 
\eqref{2.init}--\eqref{2.usigma}. Passing to the limit $\eps\to 0$ in \eqref{3.ei}
yields the entropy inequality \eqref{2.ei}, which finishes the proof of Theorem
\ref{thm.ex}.

\begin{remark}\label{rem.ei2}\rm
If we assume that \eqref{2.assum.A} holds then, arguing as for $J_5$ in the proof 
of Lemma \ref{lem.ei}, we obtain an additional term of the form
$$
  \rho c'_A \Delta t \sum_{i=1}^n \sum_{\sigma \in \E} \tau_\sigma \frac{(\mathrm{D}_\sigma u^\eps_i)^2}{u^\eps_{i,\sigma}}.
$$
This expression is well defined since it holds that $u^\eps_{i,K} > 0$ for all 
$K \in \T$ and $i=1,\ldots,n$ and consequently $u_{i,\sigma}^\eps>0$. We deduce
from the elementary inequality $(x-y)(\log x-\log y)\ge 4(\sqrt{x}-\sqrt{y})^2$
for all $x$, $y>0$ that
$$
  \rho c'_A \Delta t \sum_{i=1}^n \sum_{\sigma \in \E} \tau_\sigma 
  \frac{(\mathrm{D}_\sigma u^\eps_i)^2}{u^\eps_{i,\sigma}} 
  \geq 4 \rho c'_A \Delta t \sum_{i=1}^n \sum_{\sigma \in \E} \tau_\sigma 
  \big(\mathrm{D}_\sigma (u^\eps_i)^{1/2} \big)^2.
$$
Thus, if assumption \eqref{2.assum.A} holds, we conclude that for every $\eps > 0$,
\begin{align*}
  \rho(1-C_f\Delta t)H[u^\eps] &+ \eps\Delta t\sum_{i=1}^n\|w_i^\eps\|_{1,2,\T}^2 
  + \rho c_A\Delta t\sum_{i=1}^n\sum_{\sigma\in\E}\tau_\sigma 
  (\mathrm{D}_\sigma u_i^\eps)^2 \\
  &{}+ 4 \rho c'_A \Delta t \sum_{i=1}^n \sum_{\sigma \in \E} \tau_\sigma 
  \big(\mathrm{D}_\sigma (u^\eps_i)^{1/2} \big)^2 
  \le \rho H[u^{k-1}] + \rho C_f\Delta t\m(\Omega).
\end{align*}
Finally, applying similar arguments as at the end of the proof of Theorem 
\ref{thm.ex} and since the relative entropy $H[\cdot|\bar{u}]$ distinguishes 
from the entropy \eqref{2.def.entro} only by linear terms, we obtain the 
entropy inequality \eqref{2.ei2}.
\end{remark}


\section{A priori estimates}\label{sec.est}

We establish some a priori estimates uniform in $\Delta x$ and $\Delta t$
for the solutions to \eqref{2.init}--\eqref{2.usigma}.

\begin{lemma}[Discrete space estimates]\label{lem.est1}
Let the assumptions of Theorem \ref{thm.ex} hold and let $\Delta t<1/C_f$.
Then there exists a constant $C>0$ independent of $\Delta x$ and $\Delta t$
such that for $i=1,\ldots,n$,
$$
  \max_{k=1,\ldots,N_T}\|u_i^k\|_{0,1,\T}
	+ \sum_{k=1}^{N_T}\Delta t\|u_i^k\|_{1,2,\T}^2
	+ \sum_{k=1}^{N_T}\Delta t\|u_i^k\|_{0,3,\T}^3 \le C.
$$
\end{lemma}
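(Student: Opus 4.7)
The three estimates all come from the entropy inequality \eqref{2.ei} of Theorem \ref{thm.ex} combined with Hypothesis (H4) and standard discrete functional inequalities on admissible meshes.

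First, I would iterate \eqref{2.ei}: dropping the nonnegative dissipation term and using $\Delta t<1/C_f$ yields
\begin{equation*}
  H[u^k]\le\frac{1}{1-C_f\Delta t}H[u^{k-1}]+\frac{C_f\Delta t}{1-C_f\Delta t}\m(\Omega),
\end{equation*}
so by a discrete Gronwall argument (and the finiteness of $H[u^0]$ from (H3), combined with Jensen's inequality applied to \eqref{2.init})
\begin{equation*}
  \max_{0\le k\le N_T}H[u^k]\le e^{2TC_f}\bigl(H[u^0]+TC_f\m(\Omega)\bigr)=:C_1,
\end{equation*}
with $C_1$ independent of $\Delta x$ and $\Delta t$. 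The lower bound $h_i(s)\ge c_h(s-1)$ of (H4) together with $u_{i,K}^k\ge 0$ from Theorem \ref{thm.ex} then gives
\begin{equation*}
  c_h\|u_i^k\|_{0,1,\T}\le H[u^k]+c_h\,\m(\Omega)\le C_1+c_h\,\m(\Omega),
\end{equation*}
which is the first, $L^\infty(0,T;L^1)$ estimate.

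Next I would rewrite \eqref{2.ei} as
\begin{equation*}
  c_A\Delta t\,D_k\le H[u^{k-1}]-H[u^k]+C_f\Delta t\,H[u^k]+C_f\Delta t\,\m(\Omega),
\end{equation*}
where $D_k=\sum_{i=1}^n\sum_{\sigma\in\E}\tau_\sigma(\mathrm{D}_\sigma u_i^k)^2$, and sum over $k=1,\ldots,N_T$. The leading terms telescope and, using $H[u^{N_T}]\ge 0$ together with the uniform bound on $H[u^k]$ from the previous step,
\begin{equation*}
  c_A\sum_{k=1}^{N_T}\Delta t\,D_k\le H[u^0]+TC_f(C_1+\m(\Omega)),
\end{equation*}
which is a uniform bound on the seminorm part of $\sum_k\Delta t\|u_i^k\|_{1,2,\T}^2$. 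To promote this to a full norm estimate, I invoke the discrete mean Poincaré--Wirtinger inequality on admissible meshes satisfying \eqref{2.dd}, namely $\|v\|_{0,2,\T}\le C_{PW}(|v|_{1,2,\T}+\|v\|_{0,1,\T})$ with $C_{PW}$ depending only on $\Omega$ and $\zeta$; applied to $v=u_i^k$ and squared, combined with the $L^\infty(L^1)$ bound just established, this closes the second estimate.

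Finally, for the $L^3(0,T;L^3)$ bound I would use the two-dimensional discrete Gagliardo--Nirenberg inequality in the form
\begin{equation*}
  \|v\|_{0,3,\T}^3\le C_{GN}\,\|v\|_{1,2,\T}^2\,\|v\|_{0,1,\T},
\end{equation*}
which is standard on admissible finite-volume meshes under \eqref{2.dd}. Applying it to $v=u_i^k$, multiplying by $\Delta t$ and summing in $k$, I can pull $\max_k\|u_i^k\|_{0,1,\T}$ out of the time sum so that the right-hand side reduces to the already-controlled quantities $\sum_k\Delta t\|u_i^k\|_{1,2,\T}^2$ and $\max_k\|u_i^k\|_{0,1,\T}$. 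The only delicate point is to invoke the discrete Poincaré--Wirtinger and Gagliardo--Nirenberg inequalities in precisely the form required, with constants depending only on $\Omega$ and $\zeta$ and not on the mesh size; once those are in hand, the lemma follows from a chain of telescopings of \eqref{2.ei}.
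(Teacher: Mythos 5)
Your proposal follows essentially the same route as the paper: the entropy inequality \eqref{2.ei} plus a discrete Gronwall argument gives the uniform bound on $H[u^k]$, hence via (H4) the $L^\infty(0,T;L^1)$ bound and, after telescoping, the bound on the discrete $H^1$ seminorm, which is upgraded to the full norm by the discrete Poincar\'e--Wirtinger inequality and then to the $L^3(\Omega_T)$ bound by the discrete Gagliardo--Nirenberg inequality with $\theta=2/3$, exactly as in the paper. The only cosmetic caveat is that your explicit Gronwall constant $e^{2TC_f}$ presupposes $C_f\Delta t\le 1/2$ (the same implicit restriction underlies the paper's claim of $\Delta t$-independence), but this does not affect the validity of the argument.
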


\begin{proof}
Let $i\in\{1,\ldots,n\}$ be fixed. After summing \eqref{2.ei} over $K\in\T$ and
applying the discrete Gronwall inequality, Hypothesis (H4) shows that
$$
  \max_{k=1,\ldots,N_T}\|u_i^k\|_{0,1,\T}
	+ \sum_{k=1}^{N_T}\Delta t|u_i^k|_{1,2,\T}^2 \le C.
$$
By the discrete Poincar\'e--Wirtinger inequality \cite[Theorem 3.6]{BCF15},
we infer the bound 
$$
  \sum_{k=1}^{N_T}\Delta t\|u_i^k\|^2_{0,2,\T}\le C.
$$
Consequently, $\sum_{k=1}^{N_T}\Delta t\|u_i^k\|_{1,2,\T}^2 \le C$.
In order to show the remaining bound, we apply the discrete Gagliardo--Nirenberg
inequality with $\theta=2/3$ \cite[Theorem 3.4]{BCF15}:
$$
  \|u_i^k\|_{0,3,\T} \le C\zeta^{-\theta/2}\|u_i^k\|_{1,2,\T}^\theta
	\|u_i^k\|_{0,1,\T}^{1-\theta} 
	\le C\zeta^{-1/3}\Big(\max_{\ell=1,\ldots,N_T}\|u_i^\ell\|_{0,1,\T}^{1/3}\Big)
	\|u_i^k\|_{1,2,\T}^{2/3}.
$$
Summing over $k=1,\ldots,N_T$ gives
$$
  \sum_{k=1}^{N_T}\Delta t\|u_i^k\|_{0,3,\T}^3 \le C\zeta^{-1}
	\max_{\ell=1,\ldots,N_T}\|u_i^\ell\|_{0,1,\T}
	\sum_{k=1}^{N_T}\Delta t\|u_i^k\|_{1,2,\T}^2
  \le C.
$$
This ends the proof.
\end{proof}

The the previous proof, we use the fact that the domain $\Omega$ is two-dimensional.
We can derive a uniform estimate for $u_i^k$ in $L^{8/3}(\Omega_T)$ 
in three-dimensional domains.
This bound is sufficient subject to an adaption of the space for the following estimate.
Let the discrete time derivative 
of a function $v\in\mathcal{H}_{\T,\Delta t}$ be given by
$$
  \pa_t^{\Delta t}v^k = \frac{v^k-v^{k-1}}{\Delta t}, \quad k=1,\ldots,N_T.
$$

\begin{lemma}[Discrete time estimate]\label{lem.est2}
Let the assumptions of Theorem \ref{thm.ex} hold and let $\Delta t<1/C_f$.
Then there exists a constant $C>0$ independent of $\Delta x$ and $\Delta t$
such that for $i=1,\ldots,n$,
$$
  \sum_{k=1}^{N_T}\Delta t\|\pa_t^{\Delta t}u_i^k\|_{-1,6/5,\T} \le C.
$$
\end{lemma}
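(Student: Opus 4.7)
The plan is to bound $\|\pa_t^{\Delta t}u_i^k\|_{-1,6/5,\T}$ by testing the scheme \eqref{2.fvm} against an arbitrary $\phi\in\mathcal{H}_\T$ with $\|\phi\|_{1,6,\T}=1$ and then summing over $k$. The dual exponent $6/5$ is matched to the $L^\infty(L^1)\cap L^2(H^1)\cap L^3(L^3)$ control on $u^k$ provided by Lemma~\ref{lem.est1}: on the flux term, a three-term H\"older inequality with exponents $(2,6,3)$ exactly exploits this regularity.

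Multiplying \eqref{2.fvm} by $\phi_K$, summing over $K\in\T$, and using the antisymmetry $\mathcal{F}_{i,K,\sigma}^k=-\mathcal{F}_{i,L,\sigma}^k$ for $\sigma=K|L$, a discrete integration by parts yields
\[
\int_\Omega \pa_t^{\Delta t}u_i^k\,\phi\,dx
= \sum_{\substack{\sigma\in\E_{\rm int}\\ \sigma=K|L}}\mathcal{F}_{i,K,\sigma}^k\,\mathrm{D}_{K,\sigma}\phi
+ \sum_{K\in\T}\m(K)f_i(u_K^k)\phi_K
=: T_1+T_2.
\]
For $T_2$, the discrete H\"older inequality with exponents $(6/5,6)$ and the embedding $\|\phi\|_{0,6,\T}\le\|\phi\|_{1,6,\T}=1$ reduce matters to estimating $\|f_i(u^k)\|_{0,6/5,\T}$. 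Hypothesis (H6), the inclusion between discrete $L^{12/5}$ and $L^3$ norms, and the $L^3$ bound of Lemma~\ref{lem.est1} give $\|f_i(u^k)\|_{0,6/5,\T}\le C(1+\|u^k\|_{0,3,\T}^2)$, and a H\"older step in time then yields $\sum_k\Delta t\,\|f_i(u^k)\|_{0,6/5,\T}\le C$.

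For $T_1$, expanding $\mathcal{F}_{i,K,\sigma}^k=-\sum_j\tau_\sigma A_{ij}(u_\sigma^k)\mathrm{D}_{K,\sigma}u_j^k$ and applying a three-term H\"older inequality in which the weight $\tau_\sigma$ is split so that the first two factors produce exactly $|u_j^k|_{1,2,\T}$ and $|\phi|_{1,6,\T}$, the third factor becomes $(\sum_\sigma\m(\sigma)\dist_\sigma|A_{ij}(u_\sigma^k)|^3)^{1/3}$. To estimate it, I would invoke the Lipschitz continuity and linear growth of $A_{ij}$ from Hypothesis (H5), the bound $u_{\ell,\sigma}^k\le u_{\ell,K}^k+u_{\ell,K,\sigma}^k$ coming from \eqref{2.est.usigma}, the identity $\m(\sigma)\dist_\sigma=2\m(T_{K,\sigma})$, and the mesh-regularity estimate $\sum_{\sigma\in\E_K}\m(\sigma)\dist_\sigma\le 2\zeta^{-1}\m(K)$ (a consequence of \eqref{2.dd} together with the partition property of the half-diamonds $T_{K,\sigma}\cap K$). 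Reorganizing per cell then leads to
\[
\sum_\sigma\m(\sigma)\dist_\sigma|A_{ij}(u_\sigma^k)|^3 \le C\Big(1+\sum_\ell\|u_\ell^k\|_{0,3,\T}^3\Big).
\]

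Taking the supremum over $\phi$ with $\|\phi\|_{1,6,\T}=1$, summing over $k$, and applying Cauchy--Schwarz in $k$ together with the interpolation $\sum_k\Delta t\,\|u^k\|_{0,3,\T}^2\le T^{1/3}(\sum_k\Delta t\,\|u^k\|_{0,3,\T}^3)^{2/3}$ then closes the argument by Lemma~\ref{lem.est1}. The main obstacle is arranging the three-term H\"older so that the third, edge-based factor involving $A_{ij}(u_\sigma^k)$ can be converted into the cell-based $L^3$ norm of $u_\ell^k$; this transfer rests crucially on \eqref{2.est.usigma} and on the mesh regularity \eqref{2.dd}. The argument also hinges on the two-dimensional Gagliardo--Nirenberg bound used in Lemma~\ref{lem.est1}, which is why, as the authors already remark, in higher dimensions a larger dual exponent would be required.
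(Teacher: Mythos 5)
Your proposal follows essentially the same route as the paper's proof: testing \eqref{2.fvm} against a normalized $\phi\in\mathcal{H}_\T$, the three-factor H\"older estimate of the flux term producing $\big(\sum_\sigma\m(\sigma)\dist_\sigma|A_{ij}(u^k_\sigma)|^3\big)^{1/3}|u_j^k|_{1,2,\T}|\phi|_{1,6,\T}$, the edge-to-cell conversion via \eqref{2.est.usigma}, the linear growth of $A_{ij}$ and the mesh regularity \eqref{2.dd}, and the final H\"older/interpolation in time using Lemma \ref{lem.est1}. The only deviation is cosmetic: you bound the source term by $\|f_i(u^k)\|_{0,6/5,\T}\|\phi\|_{0,6,\T}$ instead of the paper's $\|u_i^k\|_{0,3,\T}^2\|\phi\|_{0,3,\T}$ with the discrete Sobolev embedding, which is an equivalent use of the same $L^3$ bound.
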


\begin{proof}
Let $k\in\{1,\ldots,N_T\}$ and $i\in\{1,\ldots,n\}$ be fixed and let $\phi\in
\mathcal{H}_\T$ be such that $\|\phi\|_{1,6,\T}=1$. We multiply \eqref{2.fvm} by
$\phi_K$, sum over $K\in\T$, and apply discrete integration by parts:
\begin{align}
  \sum_{K\in\T} \m(K)\frac{u^k_{i,K}-u^{k-1}_{i,K}}{\Delta t} \phi_K 
	&= -\sum_{j=1}^n \sum_{\substack{\sigma\in\E_{{\rm int}} \\ \sigma=K|L}} 
	\tau_\sigma A_{ij}(u^k_\sigma) \text{D}_{K,\sigma} u^k_j\text{D}_{K,\sigma} \phi 
	+ \sum_{K \in \T} \m(K) f_i(u^k_K) \phi_K \nonumber \\
  &= J_7 + J_8. \label{3.J78}
\end{align}
The H\"older inequality and definition of $\tau_\sigma$ imply that
\begin{equation}\label{3.J7}
  |J_7| \le \sum_{j=1}^n\bigg(\sum_{\substack{\sigma\in\E_{{\rm int}} \\ \sigma=K|L}} 
	\m(\sigma)\dist_\sigma| A_{ij}(u_\sigma^k)|^3\bigg)^{1/3}
	|u_j^k|_{1,2,\T}|\phi|_{1,6,\T}.
\end{equation}
If $u_{i,K}^k\neq u_{i,K,\sigma}^k$, we have $u_{i,\sigma}^k=\widetilde u_{i,\sigma}^k$
and $\widetilde u_{i,\sigma}^k$ solves \eqref{2.chain}. 
Then Hypothesis (H4) implies \eqref{2.est.usigma} and 
in particular $0\le u_{i,\sigma}^k\le  u_{i,K}^k+u_{i,L}^k$.
By Hypothesis (H5), the diffusion coefficients $A_{ij}$ grow at most linearly.
Consequently, for $\sigma=K|L$,
$$
  |A_{ij}(u_\sigma^k)|^3 \le C\sum_{\ell=1}^n\big(1 + |u_{\ell,K}^k|^3 
	+ |u_{\ell,L}^k|^3\big).
$$
Hence, taking into account the mesh regularity \eqref{2.dd},
$$
  \sum_{\substack{\sigma\in\E_{{\rm int}} \\ \sigma=K|L}} 
	\m(\sigma)\dist_\sigma| A_{ij}(u_\sigma^k)|^3
	\le C\sum_{K\in\T}(1 + |u_{i,K}^k|^3)
	\sum_{\sigma\in\Eint}\m(\sigma)\zeta^{-1}\dist(x_K,\sigma).
$$
Using the property \cite[(1.10)]{EGH08}
\begin{equation*}
  \sum_{K\in\T}\sum_{\sigma\in\Eint}\m(\sigma)\dist(x_K,\sigma)
	\le 2\sum_{K\in\T}\m(K) = 2\m(\Omega)
\end{equation*}
(the constant on the right-hand side slightly changes in three space dimensions),
we conclude from \eqref{3.J7} that
$$
  |J_7|\le C\zeta^{-1}\bigg(1+\sum_{j=1}^n\|u_j^k\|^3_{0,3,\T}\bigg)^{1/3}
	\sum_{j=1}^n|u_j^k|_{1,2,\T}|\phi|_{1,6,\T}.
$$
Next, in view of Hypothesis (H6),
$$
  |J_8| \le \sum_{K\in\T}\m(K)|f_i(u_K^k)|\,|\phi_K|	
	\le C'_f\bigg(\|\phi\|_{0,1,\T} + \sum_{i=1}^n \|(u_i^k)^2 \phi\|_{0,1,\T}\bigg).
$$
We apply H\"older's inequality to conclude that there exists a constant 
$C>0$ independent of $\Delta x$ and $\Delta t$ such that
$$
  |J_8| \le C \bigg(\|\phi\|_{0,6,\T} + \sum_{i=1}^n\|u^k_i\|^2_{0,3,\T} 
	\|\phi\|_{0,3,\T} \bigg).
$$
Moreover, thanks to the discrete Poincar\'e-Sobolev inequality obtained 
in \cite[Theorem 3]{BCF15}, we have
$\|\phi\|_{0,3,\T} \le C \zeta^{-5/6}\|\phi\|_{1,6,\T}$,
which implies the existence of a constant, still denoted by $C>0$, such that
$$
  |J_8| \le C \bigg(1+\sum_{i=1}^n\|u^k_i\|^2_{0,3,\T}\bigg)\|\phi\|_{1,6,\T}.
$$
Inserting the estimates for $J_7$ and $J_8$ into \eqref{3.J78} and using
Lemma \ref{lem.est1} gives
\begin{align*}
  \sum_{k=1}^{N_T}&\Delta t\bigg\|\frac{u_i^k-u_i^{k-1}}{\Delta t}\bigg\|_{-1,6/5,\T}
	= \sup_{\|\phi\|_{1,6,\T}=1}\sum_{k=1}^{N_T}\Delta t\bigg|\sum_{K\in\T}\m(K)
	\frac{u_i^k-u_i^{k-1}}{\Delta t}\phi_K\bigg| \\
	&\le C\bigg(\sum_{k=1}^{N_T}\Delta t\bigg(1+\sum_{j=1}^n\|u_j^k\|_{0,3,\T}^3
	\bigg)\bigg)^{1/3}\bigg(\sum_{k=1}^{N_T}\Delta t\sum_{j=1}^n\|u_j^k\|_{1,2,\T}^2
	\bigg)^{1/2} \\
	&\phantom{xx}{}+ CT + C \sum_{i=1}^n \sum_{k=1}^{N_T}\Delta t 
	\|u_i^k\|_{0,3,\T}^2 \\
	&\le C\bigg(\sum_{k=1}^{N_T}\Delta t\bigg(1+\sum_{j=1}^n\|u_j^k\|_{0,3,\T}^3
	\bigg)\bigg)^{1/3}\bigg(\sum_{k=1}^{N_T}\Delta t\sum_{j=1}^n\|u_j^k\|_{1,2,\T}^2
	\bigg)^{1/2} \\
	&\phantom{xx}{}+ CT + C T^{1/3}\sum_{i=1}^n 
	\bigg(\sum_{k=1}^{N_T}\Delta t \|u_i^k\|_{0,3,\T}^3\bigg)^{2/3} \le C.
\end{align*}
This concludes the proof.
\end{proof}


\section{proof of Theorem \ref{thm.conv}}\label{sec.conv}

Before we prove the theorem, we show some compactness properties.

\subsection{Compactness properties}

Let $(\mathcal{D}_m)_{m\in\N}$ be a sequence of admissible meshes of $\Omega_T$
satisfying the mesh regularity \eqref{2.dd} uniformly in $m\in\N$ and let
$\Delta t_m<1/C_f$.
We claim that the estimates from Lemmas \ref{lem.est1} and \ref{lem.est2}
imply the strong convergence of a subsequence of $(u_{i,m})$.

\begin{proposition}[Strong convergence]\label{prop.conv}\sloppy
Let the assumptions of Theorem \ref{thm.conv} hold and let $(u_m)_{m\in\N}$ be
a sequence of discrete solutions to \eqref{2.init}--\eqref{2.usigma} constructed
in Theorem \ref{thm.ex}. Then there exists a subsequence of $(u_m)$, which is not
relabeled, and $u=(u_1,\ldots,u_n)\in L^3(\Omega_T)$ such that for any $p<3$
and $i=1,\ldots,n$,
$$
  u_{i,m}\to u_i\quad\mbox{strongly in }L^p(\Omega_T)\mbox{ as }m\to\infty.
$$
\end{proposition}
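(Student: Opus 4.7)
\emph{Plan.} The approach is to combine the a priori estimates from Lemmas~\ref{lem.est1} and \ref{lem.est2} with a discrete Aubin--Lions type compactness lemma, and then upgrade strong $L^2$ convergence to $L^p$ convergence for $p<3$ by interpolation against the uniform $L^3$ bound.

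First, I would collect the relevant uniform estimates. Lemma~\ref{lem.est1} provides, for each species, a uniform bound in the discrete $L^2(0,T;H^1(\Omega))$ norm together with a uniform bound in the discrete $L^3(\Omega_T)$ norm, while Lemma~\ref{lem.est2} controls the discrete time derivative $\partial_t^{\Delta t} u_{i,m}$ in $L^1(0,T;\|\cdot\|_{-1,6/5,\T_m})$. Note that $L^{6/5}(\Omega)$ embeds continuously into the dual of the discrete $W^{1,6}$ space (by H\"older), and the embedding of the discrete $H^1$ into $L^2$ is ``compact'' in the usual discrete sense under the mesh regularity assumption \eqref{2.dd}, uniform in $m$. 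This puts us in the setting of the discrete Aubin--Lions lemma of Gallou\"et and Latch\'e~\cite{GaLa12}.

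Applying that lemma to $(u_{i,m})_{m\in\N}$, I would obtain a subsequence (not relabeled) and a function $u_i\in L^2(\Omega_T)$ such that $u_{i,m}\to u_i$ strongly in $L^2(\Omega_T)$ as $m\to\infty$. Extracting a further subsequence gives a.e.\ convergence, so in particular the nonnegativity $u_{i,K}^k\ge 0$ passes to the limit, giving $u_i\ge 0$ a.e. The uniform $L^3(\Omega_T)$ bound, combined with a.e.\ convergence and Fatou's lemma, yields $u_i\in L^3(\Omega_T)$.

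Finally, I would interpolate. For $p\in[2,3)$, writing $1/p = \theta/2 + (1-\theta)/3$ with $\theta\in(0,1]$, the standard interpolation inequality gives
$$
  \|u_{i,m}-u_i\|_{L^p(\Omega_T)}
  \le \|u_{i,m}-u_i\|_{L^2(\Omega_T)}^{\theta}
      \|u_{i,m}-u_i\|_{L^3(\Omega_T)}^{1-\theta},
$$
and since the $L^3$ factor is uniformly bounded while the $L^2$ factor tends to zero, the left-hand side vanishes. For $p\in[1,2)$ the conclusion follows directly from H\"older's inequality on the bounded domain $\Omega_T$. The main delicate point is the verification that the hypotheses of the discrete Aubin--Lions lemma in~\cite{GaLa12} are met, in particular the compactness of the embedding of the discrete $H^1$ into $L^2$ uniformly in $m$ and the compatibility of the discrete dual norm $\|\cdot\|_{-1,6/5,\T_m}$ with a fixed Banach space containing $L^2(\Omega)$; once this abstract framework is in place, the rest of the argument is routine.
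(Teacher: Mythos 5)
Your overall architecture coincides with the paper's: uniform estimates from Lemmas \ref{lem.est1} and \ref{lem.est2}, the discrete Aubin--Lions lemma of \cite{GaLa12}, and an upgrade to $L^p(\Omega_T)$, $p<3$, via the uniform $L^3$ bound. However, you leave a genuine gap exactly where the paper's proof does its real work: the verification of the structural hypotheses of \cite[Theorem 3.4]{GaLa12} for the \emph{mesh-dependent} norms $\|\cdot\|_{1,2,\T_m}$ and $\|\cdot\|_{-1,6/5,\T_m}$. Two properties must be checked: (1) any sequence $(v_m)$ with $\|v_m\|_{1,2,\T_m}\le C$ is relatively compact in $L^2(\Omega)$ (this is the discrete Rellich lemma, \cite[Lemma 5.6]{EGH08}); and (2) if $v_m\to v$ strongly in $L^2(\Omega)$ and $\|v_m\|_{-1,6/5,\T_m}\to 0$, then $v=0$. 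Since the dual norm $\|\cdot\|_{-1,6/5,\T_m}$ is defined against the discrete space $\mathcal{H}_{\T_m}$, which changes with $m$, there is no fixed dual space, and your remark that ``$L^{6/5}(\Omega)$ embeds into the dual of the discrete $W^{1,6}$ space by H\"older'' does not address the required compatibility. Property (2) is established by projecting a fixed $\phi\in C_0^\infty(\Omega)$ onto $\mathcal{H}_{\T_m}$ via $\phi_m=\sum_K\phi(x_K)\mathbf{1}_K$ and proving the uniform bound $\|\phi_m\|_{1,6,\T_m}\le C(\zeta,\Omega)\|\phi\|_{W^{1,\infty}(\Omega)}$, which uses the mesh regularity \eqref{2.dd} and the geometric identity \cite[(1.10)]{EGH08}; then $|\int_\Omega v_m\phi_m\,dx|\le C\|v_m\|_{-1,6/5,\T_m}\|\phi\|_{W^{1,\infty}(\Omega)}\to 0$ while $\int_\Omega v_m\phi_m\,dx\to\int_\Omega v\phi\,dx$, forcing $v=0$. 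Declaring this step ``the main delicate point'' and then calling the rest routine is precisely deferring the content of the proposition.

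A secondary inaccuracy: \cite[Theorem 3.4]{GaLa12} yields strong convergence in $L^1(0,T;L^2(\Omega))$, not directly in $L^2(\Omega_T)$, so your interpolation step as written lacks its input. The repair is standard and is what the paper does: extract a further a.e.\ convergent subsequence and combine it with the uniform $L^3(\Omega_T)$ bound of Lemma \ref{lem.est1} (Vitali or dominated convergence) to obtain strong convergence in $L^p(\Omega_T)$ for every $p<3$; nonnegativity and $u_i\in L^3(\Omega_T)$ then follow as you indicate via a.e.\ convergence and Fatou.
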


\begin{proof}
The idea is to apply the discrete version of the Aubin--Lions lemma obtained in
\cite[Theorem 3.4]{GaLa12}. Because of the estimates
$$
  \sum_{k=1}^{N_T}\Delta t\|u_i^k\|_{1,2,\T}^2 
	+ \sum_{k=1}^{N_T}\Delta t\|\pa_t^{\Delta t}u_i^k\|_{-1,6/5,\T} \le C,
$$
it remains to show that the discrete norms $\|\cdot\|_{1,2,\T}$ and 
$\|\cdot\|_{-1,6/5,\T}$ verify the following assumptions:
\begin{itemize}
\item[(1)] For any sequence $(v_m)_{m \in \N} \subset \mathcal{H}_{\T_m}$ such that 
there exists $C>0$ with $\|v_m\|_{1,2,\T_m} \le C$ for all $m \in \N$, 
there exists $v \in L^2(\Omega)$ satisfying, up to a subsequence, $v_m \to v$ 
strongly in $L^2(\Omega)$.
\item[(2)] If $v_m \to v$ strongly in $L^2(\Omega)$ and $\|v_m\|_{-1,6/5,\T_m} \to 0$ 
as $m \to \infty$, then $v=0$.
\end{itemize}
Property (1) is a direct consequence of \cite[Lemma 5.6]{EGH08}. For property (2), 
let $\phi\in C_0^\infty(\Omega)$ and set $\phi_K=\phi(x_K)$ for $K\in\T_m$ and
$$
  \phi_m(x) = \sum_{K\in\T_m}\phi_K\mathbf{1}_K(x), \quad x\in\Omega.
$$
Then $\phi_m\in\mathcal{H}_{\T_m}$ and, in view of the definition of 
$\|\cdot \|_{1,6,\T_m}$,
$$
  \|\phi \|^6_{1,6,\T_m} \le \m(\Omega) \|\phi\|^6_{\infty} 
	+ \|\na\phi\|^6_{L^\infty(\Omega)} \sum_{K\in \T} \sum_{\sigma\in \Eint} 
	\m(\sigma) \dist_\sigma.
$$
Hence, using \cite[(1.10)]{EGH08} this implies that
$\|\phi_m\|_{1,6,\T_m}\le C(\zeta,\Omega)
\|\phi\|_{W^{1,\infty}(\Omega)}$ and consequently,
\begin{equation}\label{3.aux2}
  \bigg|\int_\Omega v_m(x)\phi_m(x)dx\bigg| \le C(\Omega,\zeta)\|v_m\|_{-1,6/5,\T_m}
	\|\phi\|_{W^{1,\infty}(\Omega)}.
\end{equation}
Now, if we assume that $v_m\to v$ strongly in $L^2(\Omega)$ as $m\to\infty$, we have
$$
  \int_\Omega v_m(x)\phi_m(x)dx \to \int_\Omega v(v)\phi(x)dx,
$$
since also $\phi_m\to\phi$ strongly in $L^2(\Omega)$. Hence, if
$\|v_m\|_{-1,6/5,\T_m} \to 0$, we deduce from \eqref{3.aux2} that
$\int_\Omega v(x)\phi(x)dx = 0$, which yields $v=0$. This proves property (2).

We conclude from \cite[Theorem 3.4]{GaLa12} that, up to a subsequence,
$u_m\to u$ strongly in $L^1(0,T;L^2(\Omega))$. Then the uniform $L^3(\Omega_T)$ 
bound obtained in Lemma \ref{lem.est1} and the dominated convergence theorem show 
that $u_m\to u$ strongly in $L^p(\Omega_T)$ for any $p<3$.
\end{proof}

\begin{lemma}[Convergence of the gradient]\label{lem.grad}
Under the assumptions of Proposition \ref{prop.conv}, there exists a subsequence
of $(u_m)_{m\in\N}$ such that, as $m\to\infty$,
$$
  \na^m u_{i,m}\rightharpoonup\na u_i\quad\mbox{weakly in }L^2(\Omega_T),
	\quad i=1,\ldots,n,
$$
where $\na^m$ is defined in Section \ref{ssec.main}.
\end{lemma}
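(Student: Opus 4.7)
The plan is to establish the weak convergence in three steps: a uniform $L^2$ bound on $\nabla^m u_{i,m}$, extraction of a weak limit via Banach--Alaoglu, and identification of this limit with $\nabla u_i$.

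First, using the definition of $\nabla^m$ together with the identity $\m(\sigma)\dist(x_K,x_L)=2\m(T_{K,\sigma})$ for interior edges (and the analogous identity for boundary edges), one computes that
$$
\|\nabla^m u_{i,m}\|^2_{L^2(\Omega_T)}\le C\sum_{k=1}^{N_T}\Delta t_m\sum_{\sigma\in\E_m}\tau_\sigma(\mathrm{D}_\sigma u_{i,m}^k)^2,
$$
and the right-hand side is uniformly bounded by the entropy inequality \eqref{2.ei} (equivalently $\sum_k\Delta t_m|u_{i,m}^k|_{1,2,\T_m}^2\le C$ from Lemma \ref{lem.est1}). Banach--Alaoglu then furnishes, up to a subsequence, a limit $G_i\in L^2(\Omega_T;\R^2)$ with $\nabla^m u_{i,m}\rightharpoonup G_i$.

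For the identification $G_i=\nabla u_i$, I test against an arbitrary $\phi\in C_c^\infty(\Omega\times(0,T);\R^2)$. Setting
$$
\phi_\sigma^k:=\frac{1}{\Delta t_m\,\m(\sigma)}\int_{t_{k-1}}^{t_k}\int_\sigma\phi\cdot\nu_{K,\sigma}\, ds\, dt,
$$
one has the splitting
$$
\int_0^T\int_\Omega\nabla^m u_{i,m}\cdot\phi\, dx\, dt = \sum_{k=1}^{N_T}\Delta t_m\sum_{\sigma\in\E_m}\m(\sigma)\,\mathrm{D}_{K,\sigma}u_{i,m}^k\,\phi_\sigma^k + R_m,
$$
where $R_m$, which stems from replacing the average of $\phi\cdot\nu_{K,\sigma}$ over the diamond $T_{K,\sigma}\times(t_{k-1},t_k]$ by its average over $\sigma\times(t_{k-1},t_k]$, is bounded by $C\eta_m\|\nabla\phi\|_\infty\sum_k\Delta t_m|u_{i,m}^k|_{1,2,\T_m}$ (via Cauchy--Schwarz and $\sum_\sigma\m(\sigma)\dist_\sigma\le 2\m(\Omega)$) and hence tends to $0$. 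A discrete integration by parts, with no boundary term since $\phi$ has compact support in $\Omega$, rewrites the main term as
$$
-\sum_{k=1}^{N_T}\Delta t_m\sum_{K\in\T_m}u_{i,m,K}^k\sum_{\sigma\in\E_K}\m(\sigma)\phi_\sigma^k\nu_{K,\sigma},
$$
and the smoothness of $\phi$ makes the innermost sum a consistent approximation of $(\Delta t_m)^{-1}\int_{t_{k-1}}^{t_k}\int_K\operatorname{div}\phi\, dx\, dt$, with an $O(\Delta x_m)$ error uniformly over cells. Combined with the strong convergence $u_{i,m}\to u_i$ in $L^2(\Omega_T)$ from Proposition \ref{prop.conv}, passing to the limit gives
$$
\int_0^T\int_\Omega G_i\cdot\phi\, dx\, dt = -\int_0^T\int_\Omega u_i\operatorname{div}\phi\, dx\, dt,
$$
so $G_i=\nabla u_i$ in $\mathcal{D}'(\Omega\times(0,T))$; since $G_i\in L^2$, this yields $u_i\in L^2(0,T;H^1(\Omega))$ and the announced weak convergence.

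The main obstacle lies in the identification step: one must control the remainder $R_m$ and show that the discrete divergence correctly approximates $\operatorname{div}\phi$. This depends crucially on the admissibility condition $\overline{x_Kx_L}\perp\sigma$, which ensures that only the normal component of $\phi$ contributes to leading order while tangential variations average out. This is a standard finite-volume consistency fact (cf.\ \cite[Lemma 4.4]{EGH00}); I would invoke it directly rather than redoing the bookkeeping.
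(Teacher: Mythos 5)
Your argument follows the same route as the paper: the uniform bound from Lemma \ref{lem.est1} gives weak $L^2$ compactness of $(\na^m u_{i,m})$, and the weak limit is identified with $\na u_i$ by duality against smooth compactly supported vector fields, which is exactly the content of the consistency lemma the paper invokes (\cite[Lemma 4.4]{CLP03}); you simply spell that identification out (and could equally cite it) instead of quoting it. Apart from a harmless notational slip (the stray $\nu_{K,\sigma}$ after $\phi_\sigma^k$, which already contains the normal component, and the fact that $\sum_{\sigma\in\E_K}\m(\sigma)\phi_\sigma^k$ equals the cell average of $\diver\phi$ exactly by the divergence theorem), the proof is correct.
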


\begin{proof}
Lemma \ref{lem.est1} implies that $(\na^m u_{i,m})$ is bounded in $L^2(\Omega_T)$.
Thus, for a subsequence, $\na^m u_{i,m}\rightharpoonup v_i$ weakly in $\Omega_T$
as $m\to\infty$. It is shown in \cite[Lemma 4.4]{CLP03} that $v_i=\na u_i$.
\end{proof}


\subsection{Convergence of the scheme}

To finish the proof of Theorem \ref{thm.conv}, we need to show that the function
$u$ obtained in Proposition \ref{prop.conv} is a weak solution to 
\eqref{1.eq} and \eqref{1.bic}. To this end, we follow the strategy of \cite{CLP03}.
Let $i\in\{1,\ldots,n\}$ be fixed, let $\psi_i\in C_0^\infty(\Omega\times[0,T))$
be given, and let $\eta_m=\max\{\Delta x_m,\Delta t_m\}$ be sufficiently small
such that $\operatorname{supp}(\psi_i) \subset \{x\in\Omega:\dist(x,\pa\Omega)
> \eta_m\}\times[0,T)$.
For the limit, we introduce the following notation:
\begin{align*}
	F_{10}^m &= -\int_0^T\int_{\Omega}u_{i,m} \pa_t\psi_i dxdt
	- \int_{\Omega}u_{i,m}(x,0)\psi_i(x,0)dx, \\
	F_{20}^m &= \sum_{j=1}^n \int_0^T\int_{\Omega} A_{ij}(u_{m}) \na^m u_{j,m}\cdot 
	\na\psi_i dxdt, \\
	F_{30}^m &= -\int_0^T \int_{\Omega} f_i(u_{m}) \psi_i dxdt.
\end{align*}
The convergence results of Proposition \ref{prop.conv} and Lemma \ref{lem.grad},
the continuity of $A_{ij}$ and $f_i$, and the assumption on the initial data
show that, as $m\to\infty$,
\begin{align*}
  F_{10}^m+F_{20}^m+F_{30}^m
	&\to -\int_0^T\int_\Omega u_i\pa_t\psi_i dxdt
	- \int_\Omega u_i^0(x) \psi_i(x,0)dx \\	
	&\phantom{nm}{}+ \sum_{j=1}^n \int_0^T\int_\Omega A_{ij}(u) \na u_j \cdot 
	\na \psi_i dxdt - \int_0^T \int_\Omega f_i(u) \psi_i dxdt. 
\end{align*}

We proceed with the limit $m\to\infty$ in scheme \eqref{2.fvm}. For this, 
we set $\psi_{i,K}^k:=\psi_i(x_K,t_k)$, multiply \eqref{2.fvm} by 
$\Delta t_m\psi_{i,K}^{k-1}$, and sum over $K\in\T_m$ and $i=1,\ldots,n$, leading to
\begin{align}
  & F_1^m + F_2^m + F_3^m = 0, \quad\mbox{where} \label{4.FFF} \\
	& F_1^m = \sum_{k=1}^{N_T}\sum_{K\in\T}\m(K)\big(u_{i,K}^k-u_{i,K}^{k-1}\big)
	\psi_{i,K}^{k-1}, \nonumber \\
	& F_2^m = -\sum_{j=1}^n\sum_{k=1}^{N_T}\Delta t_m\sum_{K\in\T}
	\sum_{\sigma\in\Eint}\tau_\sigma 
	A_{ij}(u^k_\sigma) \text{D}_{K,\sigma} u^k_j \psi_{i,K}^{k-1}, \nonumber \\
	& F_3^m = -\sum_{k=1}^{N_T}\Delta t_m\sum_{K\in\T}\m(K)f_i(u^k_K)\psi_{i,K}^{k-1}.
	\nonumber 
\end{align}
The aim is to show that $F_{j0}^m-F_j^m\to 0$ as $m\to\infty$ for $j=1,2,3$.
Then \eqref{4.FFF} shows that $F_{10}^m+F_{20}^m+F_{30}^m\to 0$, which finishes
the proof.

It is proved in \cite[Theorem 5.2]{CLP03}, using the $L^1(\Omega_T)$ bound
for $u_m$ and the regularity of $\phi$, that $F_{10}^m-F_1^m\to 0$.
Furthermore,
\begin{align*}
  |F_{30}^m-F_3^m| &\le \bigg|\sum_{k=1}^{N_T}\sum_{K\in\T}f_i(u^k_K)\int_K 
	\int_{t_{k-1}}^{t_k} \big(\psi_{i,K}^{k-1} - \psi_i(x,t) \big)dx dt \bigg| \\
	&\le \eta_m\|\psi_i\|_{C^1(\overline{\Omega}_T)} \bigg(\sum_{k=1}^{N_T} 
	\Delta t_m \sum_{K\in\T} \m(K) |f_i(u^k_K)| \bigg).
\end{align*}
We deduce from the growth condition for $f_i$ in Hypothesis (H6) and 
Lemma \ref{lem.est1} that
$$
  |F_{30}^m-F_3^m| \le C\eta_m\|\psi_i\|_{C^1(\overline{\Omega}_T)}
	\bigg(T\m(\Omega) +\sum_{j=1}^n \sum_{k=1}^{N_T}\Delta t_m \|u_j^k\|_{0,2,\T_m}^2
	\bigg) \le C\eta_m\to 0.
$$
The proof of $F_{20}^m-F_2^m\to 0$ is more involved. 
First, we apply discrete integration
by parts and split $F_2^m=F_{21}^m+F_{22}^m$ into two parts with
\begin{align*}
  F_{21}^m &= \sum_{j=1}^n \sum_{k=1}^{N_T}\Delta t_m\sum_{K\in\T}\sum_{\sigma\in\Eint} 
	\tau_\sigma A_{ij}(u^k_K)\text{D}_{K,\sigma} u^k_{j}\text{D}_{K,\sigma} 
	\psi_{i}^{k-1}, \\
  F_{22}^m &=\sum_{j=1}^n\sum_{k=1}^{N_T}\Delta t_m\sum_{K\in\T}
	\sum_{\sigma\in\Eint}\tau_\sigma \big(A_{ij}(u^k_\sigma)-A_{ij}(u^k_K)\big) 
	\text{D}_{K,\sigma} u_j^k\text{D}_{K,\sigma} \psi_{i}^{k-1}.
\end{align*}
The definition of the discrete gradient $\na^m$ in Section \ref{ssec.main} gives
\begin{align*}
  |F_{20}^m - F_{21}^m| 
	&\le \sum_{j=1}^n \sum_{k=1}^{N_T} \sum_{K\in\T} \sum_{\sigma\in\Eint} \m(\sigma) 
	|A_{ij}(u^k_K)| |\text{D}_{K,\sigma} u^k_j| \\ 
	&\phantom{xx}{}\times \bigg|\int_{t_{k-1}}^{t_k}\bigg(\frac{\text{D}_{K,\sigma} 
	\psi_i^{k-1}}{\dist_{\sigma}} - \frac{1}{\m(T_{K,\sigma})} \int_{T_{K,\sigma}} 
	\na\psi_i \cdot \nu_{K,\sigma} dx \bigg) dt \bigg|.
\end{align*}
It is shown in the proof of \cite[Theorem 5.1]{CLP03} that there exists a constant 
$C_0>0$ such that
$$
  \bigg|\int_{t_{k-1}}^{t_k}\bigg(\frac{\text{D}_{K,\sigma} 
	\psi_i^{k-1}}{\dist_{\sigma}} - \frac{1}{\m(T_{K,\sigma})} \int_{T_{K,\sigma}} 
	\na\psi_i \cdot \nu_{K,\sigma} dx \bigg) dt \bigg|
	\le C_0\Delta t_m\eta_m.
$$
Hence, by the Cauchy--Schwarz inequality,
\begin{align*}
  |F_{20}^m-F_{21}^m| &\le C_0\eta_m \sum_{j=1}^n \sum_{k=1}^{N_T} \Delta t_m 
	\sum_{K\in\T} \sum_{\sigma\in\Eint}\m(\sigma)|A_{ij}(u^k_K)| \, 
	|\text{D}_{K,\sigma} u^k_j | \\
  &\le C_0\eta_m\sum_{j=1}^n \sum_{k=1}^{N_T} \Delta t_m \|u_j^k\|_{1,2,\T_m} 
	\bigg(\sum_{K\in\T} |A_{ij}(u^k_K)|^2 \sum_{\sigma\in\Eint}\m(\sigma)\dist_\sigma 
	\bigg)^{1/2}.
\end{align*}
It follows from the mesh regularity \eqref{2.dd} and \cite[(1.10)]{EGH08} that
$$
  \sum_{\sigma\in\Eint}\m(\sigma)\dist_\sigma
	\le \zeta^{-1}\sum_{\sigma\in\Eint}\m(\sigma)\dist(x_K,\sigma)
	\le 2\zeta^{-1}\m(K).
$$
Therefore, applying the Cauchy--Schwarz inequality again, we obtain
\begin{align*}
  |F_{20}^m-F_{21}^m| 
	&\le C(\zeta)\eta_m\sum_{j=1}^n \sum_{k=1}^{N_T} \Delta t_m 
	\|u_j^k\|_{1,2,\T_m}\bigg(\sum_{K\in\T}\m(K)|A_{ij}(u^k_K)|^2\bigg)^{1/2} \\
  &\le C(\zeta)\eta_m\bigg(\sum_{j=1}^n\sum_{k=1}^{N_T} \Delta t_m 
	\|u_j^k\|_{1,2,\T_m}^2\bigg)^{1/2}\bigg(\sum_{j=1}^n\sum_{k=1}^{N_T} \Delta t_m 
	\|A_{ij}(u^k)\|_{0,2,\T_m}^2\bigg)^{1/2}.
\end{align*}
Since $A_{ij}(u^k)$ grows at most linearly,
$$
  |F_{20}^m-F_{21}^m| \le C(\zeta)\eta_m\bigg(\sum_{j=1}^n
	\sum_{k=1}^{N_T} \Delta t_m \|u_j^k\|_{1,2,\T_m}^2\bigg)^{1/2}
	\bigg(\sum_{j=1}^n\sum_{k=1}^{N_T} \Delta t_m 
	\big(1+\|u_j^k\|_{0,2,\T_m}^2\big)\bigg)^{1/2}.
$$
(Here, we see that the $L^{8/3}(\Omega_T)$ estimate of $u_i^k$ for three-dimensional
domains is sufficient.)
The uniform estimates in Lemma \ref{lem.est1} then imply that
$|F_{20}^m-F_2^m|\le C(\zeta)\eta_m\to 0$ as $m\to\infty$.

Finally, we estimate $F_{22}^m$ according to
\begin{align}
  & |F_{22}^m|\le C\eta_m\|\psi_i\|_{C^1(\overline{\Omega}_T)}G^m, \quad\mbox{where} 
	\label{4.F22} \\
  & G^m = \sum_{j=1}^n \sum_{k=1}^{N_T}\Delta t_m \sum_{K\in\T} 
	\sum_{\sigma\in\Eint}\tau_\sigma|A_{ij}(u^k_\sigma)-A_{ij}(u^k_K)|\,
	|\text{D}_{K,\sigma} u_j^k|. \nonumber
\end{align}
Since $A_{ij}$ is assumed to be Lipschitz continuous in Hypothesis (H5) and
$u_{m,i,\sigma}^k\le u_{m,i,K}^k+u_{m,i,L}^k$ for $\sigma\in\E_{\rm int}$
(see \eqref{2.est.usigma}), we deduce from the Cauchy--Schwarz
inequality that
\begin{align*}
  G^m &\le C\sum_{j,\ell=1}^n \sum_{k=1}^{N_T}\Delta t_m \sum_{K\in\T} 
	\sum_{\sigma\in\Eint}\tau_\sigma|u_{\ell,\sigma}^k-u_{\ell,K}^k|\,
	|\text{D}_{K,\sigma}u_j^k| \\
  &\le C\bigg(\sum_{\ell=1}^n\sum_{k=1}^{N_T}\Delta t_m\sum_{\sigma\in\E}\tau_\sigma
	(\text{D}_\sigma u_\ell^k)^2\bigg)^{1/2}\bigg(\sum_{j=1}^n\sum_{k=1}^{N_T}
	\Delta t_m\sum_{\sigma\in\E}\tau_\sigma(\text{D}_\sigma u_j^k)^2\bigg)^{1/2}.
\end{align*}
By Lemma \ref{lem.est1}, the right-hand side is bounded uniformly in $m$.
Thus, we infer from \eqref{4.F22} that $|F_{22}^m|\le C\eta_m\to 0$ and
eventually, $|F_{20}^m-F_2^m| \le |F_{20}^m-F_{21}^m|+|F_{22}^m|\to 0$.
This finishes the proof.


\section{Proof of Theorem \ref{thm.time}}\label{sec.time}

We see from scheme \eqref{2.fvm} after summation over $K\in\T$ that
$u_i^k$ and $u_i^0$ have the same mass and hence, $\|u_i^k\|_{0,1,\T}=\bar u_i$
for $i=1,\ldots,n$.
Summing the entropy inequality \eqref{2.ei} with $C_f=0$ over $k\ge 1$ gives
$$
  c_A\Delta t\sum_{k=1}^\infty\sum_{i=1}^n|u_i^k|_{1,2,\T}^2
  = c_A\Delta t\sum_{k=1}^\infty\sum_{i=1}^n\sum_{\sigma\in\E}\tau_\sigma
  (\textrm{D}_\sigma u_i^k)^2 \le H[u^0].
$$
This shows that the sequence $k\mapsto \sum_{i=1}^n|u_i^k|_{1,2,\T}^2$
converges to zero as $k\to\infty$. The first statement of the theorem then
follows from the discrete Poincar\'e--Wirtinger inequality \cite[Theorem 3.6]{BCF15},
$$
  \|u_i^k-\bar u_i\|_{0,2,\T}\le C_1\zeta^{-1/2}|u_i^k|_{1,2,\T}\to 0
  \quad k\to\infty.
$$

For the second statement, we deduce from the modified entropy inequality \eqref{2.ei2} that
$$
  H[u^k|\bar u] + 4c'_A\Delta t\sum_{i=1}^n\sum_{\sigma\in\E}\tau_\sigma
  (\textrm{D}_\sigma(u_i^k)^{1/2})^2 \le H[u^{k-1}|\bar u].
$$
By the discrete logarithmic Sobolev inequality \cite[Prop.~5.3]{CCHK20},
$$
  H[u^k|\bar u] \le C_2(u^0)\zeta^{-2}\sum_{i=1}^n\sum_{\sigma\in\E}\tau_\sigma
  (\textrm{D}_\sigma(u_i^k)^{1/2})^2,
$$
we find that
$$
  \bigg(1 + \frac{4c'_A\zeta^2}{C_2(u^0)}\Delta t\bigg)H[u^k|\bar u] 
  \le H[u^{k-1}|\bar u].
$$
Setting $\lambda=4c'_A\zeta^2/C_2(u^0)$ and solving the recursion yields
$$
  H[u^k|\bar u]\le (1+\lambda\Delta t)^{-k}H[u^0|\bar u] 
  \le (1+\lambda t_k/k)^{-k}H[u^0|\bar u]
  \le e^{-\lambda t_k}H[u^0|\bar u].
$$
Finally, we apply the discrete Csisz\'ar--Kullback--Pinsker
inequality
$$
  \sum_{i=1}^n \pi_i\|u_i^k-\bar u_i\|_{0,1,\T}^2 \le C_3H[u^k|\bar u],
$$
where $C_3=2\max_{i=1,\ldots,n}\bar u_i$. The proof of this inequality
follows exactly the proof of \cite[Theorem A.3]{Jue16} (just replace integration
over $\Omega$ by summation over $K$). This finishes the proof.


\section{Numerical results}\label{sec.numer}

We present in this section some numerical experiments for the SKT model 
\eqref{1.eq}--\eqref{1.bic} in one and two space dimensions and for two and three 
species. For the two-species SKT model, some of our test cases are inspired by 
\cite{GGJ01,GLS09}. 

\subsection{Implementation of the scheme}

The finite-volume scheme \eqref{2.init}--\eqref{2.usigma} is implemented in MATLAB. 
Since the numerical scheme is implicit in time, we have to solve a nonlinear system 
of equations at each time step.
In the one-dimensional case, we use Newton's method. Starting from 
$u^{k-1}=(u^{k-1}_1, u^{k-1}_2)$, we apply a Newton method with precision 
$\eps = 10^{-10}$ to approximate the solution to the scheme at time step $k$.
In the two-dimensional case, we use a Newton method complemented by an adaptive 
time-stepping strategy to approximate the solution of the scheme at time $t_k$. 
More precisely, starting again from $u^{k-1}=(u^{k-1}_1, u^{k-1}_2)$, we launch a 
Newton method. If the method does not converge with precision 
$\eps= 10^{-8}$ after at most $50$ steps, we halve the time step size and restart the 
Newton method. At the beginning of each time step, we double the previous time 
step size. Moreover, we impose the condition 
$10^{-8}\leq \Delta t_{k-1} \leq 10^{-2}$ with 
an initial time step size $\Delta t_0 = 10^{-5}$.

\subsection{Test case 1: Rate of convergence in space}

In this section, we illustrate the order of convergence in space for the two-species 
SKT model in one space dimension with $\Omega = (-\pi,\pi)$. We choose the
coefficients $a_{i0}=0.05$ and $a_{ii} =2.5\cdot 10^{-5}$ for $i=1,2$, 
$a_{12}=1.025$ and $a_{21}=0.075$. We take rather stiff values of the Lotka--Volterra constants as in \cite[Section 3.3]{GLS09},
$b_{10}=59.7$, $b_{20}=49.75$, $b_{11}=24.875$, and 
$b_{12}=b_{21}=b_{22}=19.9$. Finally, we impose the initial datum
\begin{align*}
  & u_1^0(x,y) = 2 + 0.31 f(x-0.25) + 0.31 f(x-0.75),\quad u_2^0(x,y) = 0.5, \\
	& \mbox{where }f(x)=\max\{1 - 8^2 x^2, 0\}. 
\end{align*}
Since exact solutions to the SKT model are not explicitly known, we compute 
a reference solution on a uniform mesh composed of $5120$ cells and with 
$\Delta t = (1/5120)^2$. We use this rather small value of $\Delta t$
because the Euler discretization in time exhibits a first-order convergence rate, 
while we expect, as observed for instance in \cite{CG20}, a second-order convergence 
rate in space for scheme \eqref{2.init}-\eqref{2.usigma}, due to the logarithmic 
mean used to approximate the mobility coefficients in the numerical fluxes. 
We compute approximate solutions on uniform meshes made of $40$, $80$, $160$, 
$320$, $640$, and $1280$ cells, respectively. In Table \ref{table.conv}, we present
the $L^2(\Omega)$ norm of the difference between the approximate solutions and 
the average of the reference solution $u_{\rm ref}$ at the final time $T=10^{-3}$. 
As expected, we observe a second-order convergence rate in space.

\begin{table}[!ht]
\begin{center}
\begin{tabular}{|C{1cm}|C{2cm}|C{1cm}|C{2cm}|C{1cm}|}
  \hline
   \multirow{2}*{cells} 
    & \multicolumn{2}{c|}{$u_1$} & \multicolumn{2}{c|}{$u_2$}\\
   \cline{2-5}   
   & $L^2$ error & order & $L^2$ error & order \\
  \hline
  40 & 8.2518e-04 &   & 2.6979e-05 &  \\
  80 & 2.1542e-04 & 1.94 & 1.2174e-05 & 1.15  \\
  160 & 5.5456e-05 & 1.96 & 4.2493e-06 & 1.52 \\
  320 & 1.3889e-05 & 2.00 & 1.0963e-06 & 1.95  \\
  640 & 3.4352e-06 & 2.02 & 2.7278e-07 & 2.01  \\
  1280 & 8.1811e-07 & 2.07 & 6.5056e-08 & 2.07 \\
  \hline
\end{tabular}
\caption{$L^2(\Omega)$ norm of the difference $u_i-u_{{\rm ref},i}$
in space at final time $T=10^{-3}$.}
\label{table.conv}
\end{center}
\end{table}


\subsection{Test case 2: Pattern formation}\label{ssec.pattern}

We illustrate the formation of spatial pattern exhibited by the two-species SKT model
in the two-dimensional domain $\Omega = (0,1)^2$ with a mesh composed of $3584$ 
triangles. The diffusion and Lotka--Volterra coefficients are chosen as in
test case 1.
For these values, the stable equilibrium for the Lotka--Volterra ODE system
is given by $u^*=(2,0.5)$ (see, e.g., \cite{TLP10}).
The initial datum is a perturbation of the constant equilibrium:
\begin{align}
  & u_1^0(x,y) = 2 + 0.31 g(x-0.25,y-0.25) + 0.3 g(x-0.75, y-0.75),\quad
  u_2^0(x,y) = 0.5, \nonumber \\
  & \mbox{where }g(x,y) = \max\{1 - 8^2 x^2 - 8^2 y^2,0\}. \label{8.def.g}
\end{align}
In Figure \ref{Fig.pattern}, we show the evolution of the densities $u_1$ and $u_2$ 
at different times. At time $t=0.5$, the solution $(u_1,u_2)$ seems to converge 
towards the constant equilibrium state $u^*$. However, due to the cross-diffusion 
terms, we observe after this transient time the formation of spatial patterns, 
which indicate that the state $(2,0.5)$ is unstable for the PDE system.

Indeed, it is proved in \cite[Theorem 3.1]{TLP10} that the constant linearly stable
equilibrium $u^*$ for the Lotka--Volterra system is unstable for the SKT model if certain conditions are satisfied. To this end, we introduce the matrices
$$
  D^* = \begin{pmatrix}
  a_{10} + 2a_{11}u_1^* + a_{12}u_2^* & a_{12}u_1^* \\
  a_{21}u_2^* & a_{20} + a_{21}u_1^* + 2a_{22}u_2^*
  \end{pmatrix}, \quad J^*=\na_u f(u^*).
$$
The conditions are as follows: 
(i) $\operatorname{trace}(D^*)>0$, (ii) $\det(D^*)>0$,
(iii) $\det(D^*)+\det(J^*)>0$,
(iv) there exists at least one positive eigenvalue $\mu$ of the Neumann
problem $-\Delta v=\mu v$ in $\Omega$, $\na v\cdot\nu=0$ on $\pa\Omega$ such that
$0<k_-\le\mu\le k_+$, where $k_\pm$ are the solutions to the quadratic equation
$$
  \det(D^*) k^2 + (\det(J^*)+\det(D^*))k + \det(J^*) = 0.
$$
With our chosen values, we have
$\operatorname{trace}(J^*)\approx -59.7<0$, $\det(J^*)\approx 99.0025>0$
(this implies that $u^*$ is stable for the Lotka--Volterra system) and
$\operatorname{trace}(D^*)\approx 0.7626 > 0$, $\det(D^*)\approx 0.00357>0$.
The eigenvalues of the Neumann problem on $(0,1)^2$ are given by
$\mu_p=(p_1\pi)^2 + (p_2^2\pi)^2$ for $p=(p_1,p_2)\in{\mathbb N}^2$
\cite[Section 3.1]{GrNg13}.
A computation shows that $k_+\approx 129.82$ and $k_-\approx 1$.
The assumptions of \cite[Theorem 3.1]{TLP10} are satisfied and therefore,
$u^*$ is an unstable equilibrium for the SKT model. Moreover, because of
$$
  \frac{b_{10}}{b_{11}} < \frac{b_{20}}{b_{21}} \quad \mbox{and} \quad 
	\frac{b_{20}}{b_{22}} < \frac{b_{10}}{b_{12}},
$$
the two species coexist \cite[Section 6.2]{SK97}. These theoretical results
confirm our numerical outcome. 

\begin{figure}
\begin{center}
\includegraphics[scale=0.35]{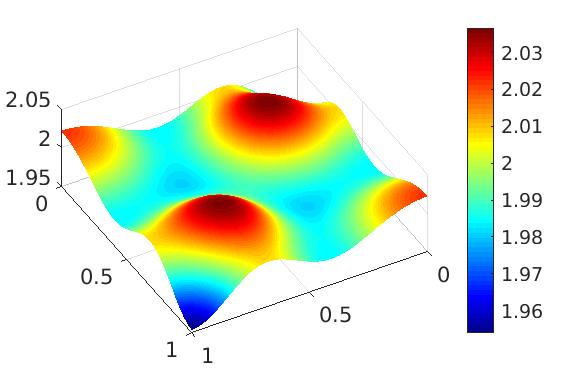}
\includegraphics[scale=0.35]{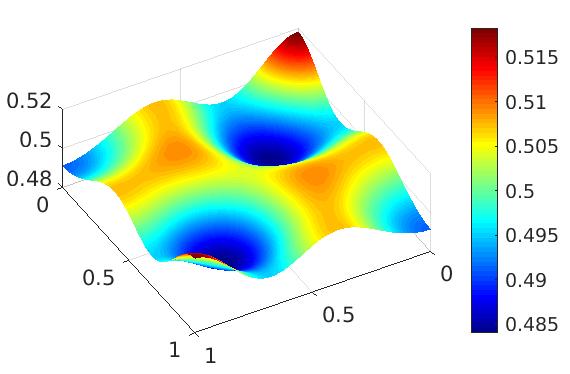}
\includegraphics[scale=0.35]{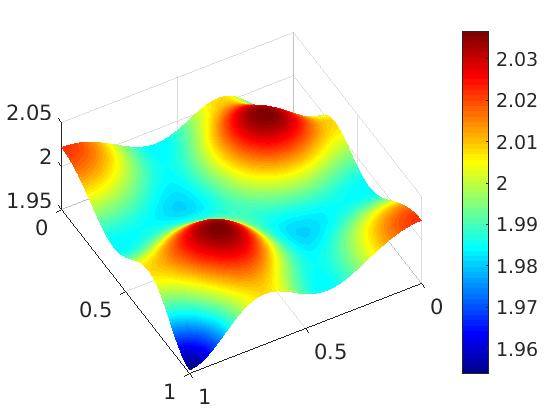}
\includegraphics[scale=0.35]{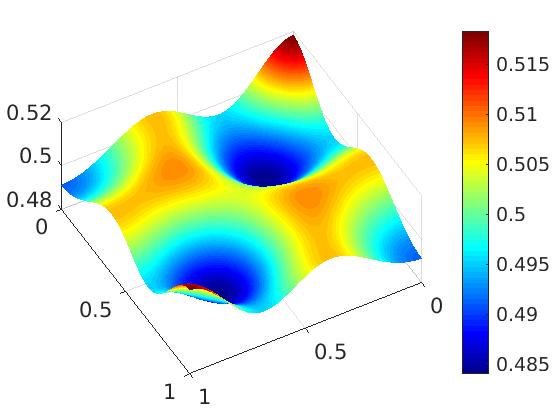}
\includegraphics[scale=0.35]{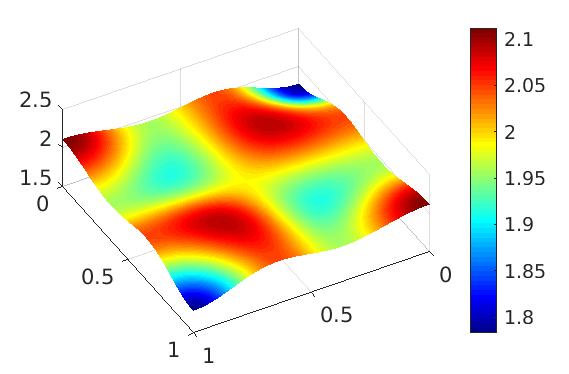}
\includegraphics[scale=0.35]{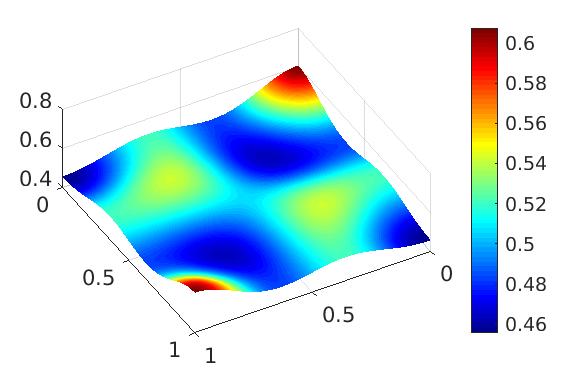}
\includegraphics[scale=0.35]{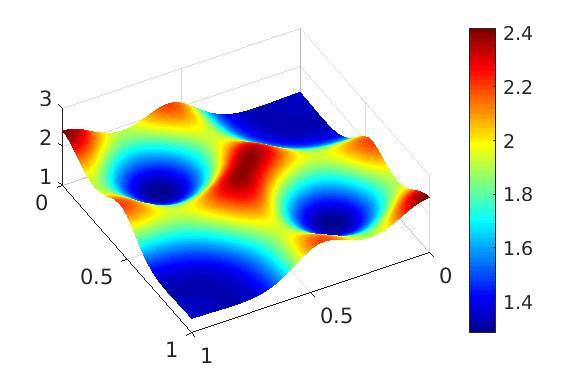}
\includegraphics[scale=0.35]{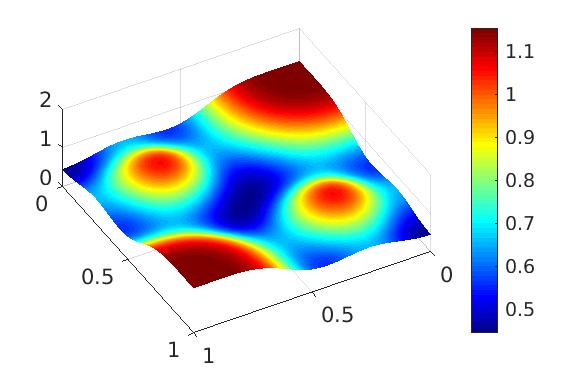}
\end{center}
\caption{Test case 2: Evolution of the densities $u_1$ (left column) 
and $u_2$ (right column) at times $t=0.5$, $1$, $2$ and $4$ (from top to bottom).}
\label{Fig.pattern}
\end{figure}

\subsection{Test case 3: Spatial niche and repulsive potential}\label{ssec.niche}

In this section, we consider the two-species SKT model with environmental potential, 
i.e., we add to equation \eqref{1.eq} a smooth function $\phi(x)$,
$$
  \pa_t u_i - \diver\bigg(\sum_{j=1}^n A_{ij}(u)\na u_j-d_i u_i \na \phi \bigg) 
  = f_i(u)\quad\mbox{in }\Omega,\ t>0,\ i=1,\ldots,n,
$$
where $d_i > 0$ and $A_{ij}(u)$ is given by \eqref{1.A} for $i$, $j=1,\ldots,n$. 
We adapt the definition of the finite-volume scheme \eqref{2.init}--\eqref{2.usigma} 
by defining the fluxes as
$$
  \mathcal{F}_{i,K,\sigma}^k = - \bigg(\sum_{j=1}^n \tau_\sigma A_{ij}(u^k_\sigma) 
  \textrm{D}_{K,\sigma} u^k_j - d_i u^k_{i,\sigma} \textrm{D}_{K,\sigma}\phi\bigg) 
  \quad\mbox{for }K\in \T,\ \sigma \in \E_K,
$$
where
$$
  \phi_K = \frac{1}{\m(K)} \int_K \phi(x) \, dx \quad\mbox{for }K \in \T.
$$
By adapting the proof of Theorem \ref{thm.ex}, we obtain the following discrete 
entropy inequality:
\begin{align*}
  (1-&C_f\Delta t)H[u^k] + \frac{c_A}{2}\Delta t\sum_{i=1}^n\sum_{\sigma\in\E}
  \tau_\sigma(\mathrm{D}_\sigma u_i^k)^2 \\ 
  &\le H[u^{k-1}] + \frac{\Delta t}{2c_A} \sum_{i=1}^n d_i^2 \sum_{\sigma \in \E}
  \tau_\sigma (\mathrm{D}_\sigma \phi)^2 + C_f\Delta t\m(\Omega), \quad k\ge 1.
\end{align*}
This estimate ensures the existence of a nonnegative solution to the scheme and 
its convergence to the continuous model.

Now we consider a mesh of $\Omega = (0,1)^2$ composed of $3584$ triangles and 
choose the same values for the diffusion and Lotka--Volterra constants as
in Section \ref{ssec.pattern}. Furthermore, we take $d_1 = d_2 = 2$ and the 
environmental potential
$$
  \phi(x,y) = \exp\big(-2 \big((x-0.5)^2 + (y-0.5)^2\big)\big).
$$
The inital data is defined according to
\begin{align*}
  u_1^0(x,y) &= 2 + 0.31g(x-0.25,y-0.25) + 0.31g(x-0.75, y-0.75), \\
  u_2^0(x,y) &= 0.5 + 0.2 g(x-0.5,y-0.5),
\end{align*}
where the function $g$ is given by \eqref{8.def.g}.

In Figure \ref{Fig.niches} we illustrate the creation of an ecological niche. We observe that species 2 creates a niche around the point $(0.5,0.5)$ to avoid extinction even when dominated by species 1. This figure can be seen as a 
two-dimensional variant of the numerical experiments done in 
\cite[Section 3.2, case I]{GLS09}.

\begin{figure}[!ht]
\begin{center}
\includegraphics[scale=0.35]{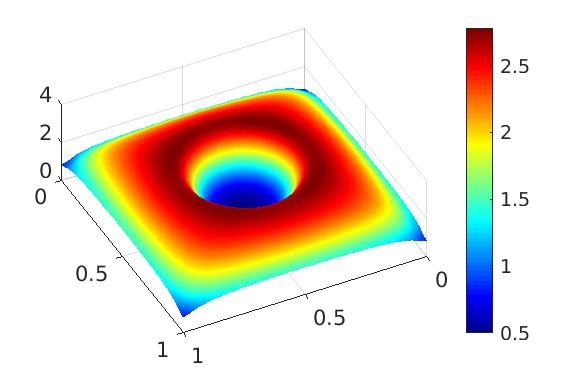}
\includegraphics[scale=0.35]{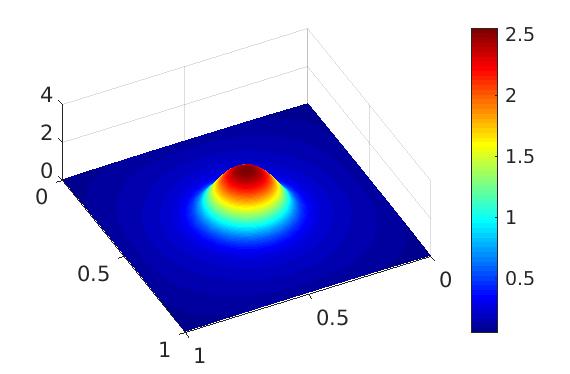}
\end{center}
\caption{Test case 3: Density $u_1$ (left) and $u_2$ (right) at time $t=0.5$.}
\label{Fig.niches}
\end{figure}

\subsection{Test case 4: Convergence to a constant steady state}\label{ssec.time}

In this last numerical experiment, we illustrate Theorem \ref{thm.time}. In particular, we consider the SKT model with three species and without source terms. 
We choose the values $a_{10}=1$, $a_{20}=5$, $a_{30} = 7$, $a_{11}=a_{21}=a_{31}=1$, 
$a_{12}=3$, $a_{22}=2$, $a_{32}=1$, $a_{13}=4$, $a_{23}=4/3$, $a_{33}=2$, 
with $\pi_1=1$, $\pi_2=3$ and $\pi_3=4$ and the initial datum
\begin{align*}
  u_1^0(x,y)&= 0.5 \mathbf{1}_{(0.2,0.4)^2}(x,y),\\ 
  u_2^0(x,y)&= 0.7 \mathbf{1}_{(0.6,0.8)\times(0.2,0.4)}(x,y),\\
  u_3^0(x,y)&= \mathbf{1}_{(0.4,0.6) \times (0.6,0.8)}(x,y).
\end{align*}
In Figure \ref{Fig.longtime}, we present in semilogarithmic scale the behavior 
of the relative Boltzmann entropy
$$
  H[u^k|\bar{u}] = \sum_{i=1}^n \sum_{K \in \T} \m(K) \pi_i \bigg( u^k_{i,K} 
  \log\bigg(\frac{u^k_{i,K}}{\bar{u}_i}\bigg) + \bar{u}_i - u^k_{i,K} \bigg), 
$$
where $\bar{u}_i = \m(\Omega)^{-1} \int_\Omega u^0_i(x) \, dx$ for $i=1,\ldots,n$, 
and the squared weighted $L^1$ norm
$$
  \sum_{i=1}^n \pi_i \|u^k_i - \bar{u}_i \|^2_{0,1,\T},
$$
versus time (with final time $T=1$) for a mesh of $\Omega = (0,1)^2$ composed of $3584$ triangles. As proved in Theorem \ref{thm.time}, we observe an exponential 
convergence rate of the solutions to the scheme towards the constant steady state.

\begin{figure}[!ht]
\begin{center}
\includegraphics[scale=1]{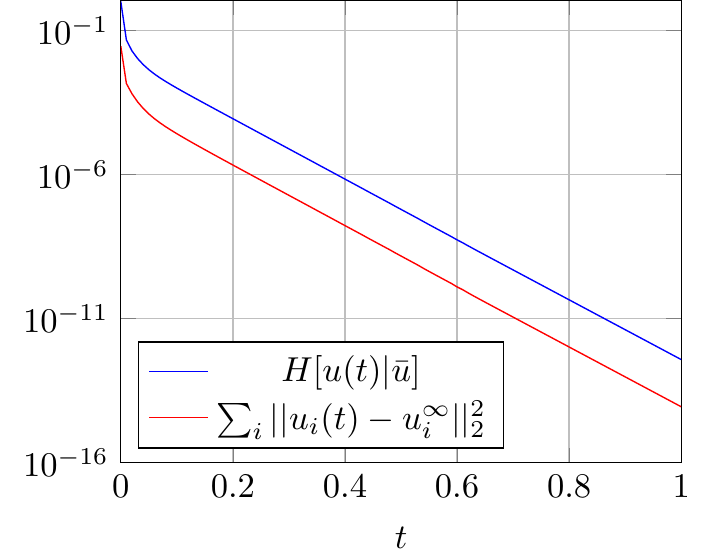}
\end{center}
\caption{Test case 4: Evolution of the relative Boltzmann entropy and the squared weighted $L^1$ norm in semilogarithmic scale.}
\label{Fig.longtime}
\end{figure}


\begin{appendix}
\section{Computation of the constant $C_f$ given by \eqref{skt.defCf}}\label{app}

We claim that the Lotka--Volterra terms \eqref{1.f} satisfy
Hypothesis (H6) with $C_f$ given by \eqref{skt.defCf}.
For this, define $g(s)=s(\log s-1)+1$ for $s\ge 0$.
Then, for $u=(u_1,\ldots,u_n) \in (0, \infty)^n$, using $-u_i \log u_i \leq e^{-1}$, 
we have
\begin{align*}
  \sum_{i=1}^n f_i(u)\pi_i\log(u_i) 
	&\le \sum_{i=1}^n \pi_i b_{i0} g(u_i) + \sum_{i=1}^n \pi_i b_{i0}u_i 
	- \sum_{i=1}^n \pi_i u_i \log(u_i) \sum_{j=1}^n b_{ij} u_j \\
  &\le h(u) \max_{i=1,\ldots,n} b_{i0} + \sum_{i=1}^n \pi_i b_{i0} u_i 
	+ \frac{1}{e} \sum_{i=1}^n \pi_i \sum_{j=1}^n b_{ij} u_j.
\end{align*}
Reordering the terms in the last sum yields
\begin{align*}
  \sum_{i=1}^n f_i(u) \pi_i \log(u_i) 
	&\le h(u)\max_{i=1,\ldots,n} b_{i0} + \sum_{i=1}^n \pi_i u_i 
	\bigg(b_{i0} + \frac{1}{e\pi_i} \sum_{j=1}^n \pi_j b_{ji} \bigg) \\
  &\le h(u) \max_{i=1,\ldots,n} b_{i0} + \max_{i=1,\ldots,n} 
	\bigg(b_{i0} + \frac{1}{e\pi_i} \sum_{j=1}^n \pi_j b_{ji} \bigg) 
	\sum_{i=1}^n \pi_i u_i.
\end{align*}
Now, we simply notice that the inequality $s\le (1+g(s))/\log 2$ holds true
for all $s\ge 0$. Then, for $s=\sum_{i=1}^n \pi_i u_i$, we infer that
\begin{align*}
  \sum_{i=1}^n f_i(u) \pi_i \log(u_i) 
	&\le (1+h(u))\max_{i=1,\ldots,n} b_{i0} 
  + \frac{1+h(u)}{\log 2}\max_{i=1,\ldots,n} 
  \bigg(b_{i0} + \frac{1}{e\pi_i} \sum_{j=1}^n \pi_j b_{ji} \bigg) \\
	&\le \frac{2}{\log 2}\max_{i=1,\ldots,n} 
  \bigg(b_{i0} + \frac{1}{e\pi_i} \sum_{j=1}^n \pi_j b_{ji} \bigg)(1+h(u)) 
	= C_f(1+h(u)).
\end{align*}
which shows the claim.

\end{appendix}


\end{document}